\newtheorem{theorem}{Theorem}
\newtheorem{corollary}{Corollary}
\newtheorem{lemma}{Lemma}
\newtheorem{proposition}{Proposition}
\theoremstyle{definition}
\newtheorem{remark}{Remark}
\newcommand{\C}{\mathbb{C}}
\newcommand{\End}{{\operatorname{End}}}
\newcommand{\Aut}{{\operatorname{Aut}}}
\newcommand{\id}{{\operatorname{id}}}
\newcommand{\hyp}{{\operatorname{hyp}}}
\newcommand{\att}{{\operatorname{att}}}
\newcommand{\sad}{{\operatorname{sad}}}
\newcommand{\rep}{{\operatorname{rep}}}
\newcommand{\rne}{{\operatorname{rne}}}
\newcommand{\tam}{{\operatorname{tam}}}
\newcommand{\bas}{{\operatorname{bas}}}
\newcommand{\nrc}{{\operatorname{nrc}}}
\newcommand{\npr}{{\operatorname{npr}}}
\begin{document}

\title{Dynamics of generic automorphisms \\ of Stein manifolds with the density property}

\author{Leandro Arosio and Finnur L\'arusson}

\address{Dipartimento Di Matematica, Universit\`a di Roma \lq\lq Tor Vergata\rq\rq, Via Della Ricerca Scientifica 1, 00133 Roma, Italy}
\email{arosio@mat.uniroma2.it}

\address{School of Mathematical Sciences, University of Adelaide, Adelaide SA 5005, Australia}
\email{finnur.larusson@adelaide.edu.au}

\subjclass{Primary 32H50.  Secondary 14R10, 32M17, 32Q28, 37F80}

\thanks{L.~Arosio was partially supported  by  PRIN {\sl  Real and Complex Manifolds: Geometry and Holomorphic Dynamics} n. 2022AP8HZ9, by INdAM, and by the   MUR Excellence Department Project MatMod@TOV
CUP:E83C23000330006}

\date{7 March 2023.  Minor edits 21 March 2025}

\keywords{Dynamics, Stein manifold, density property, linear algebraic group, affine homogeneous space, Fatou set, Julia set, periodic point, non-wandering point, chain-recurrent point, homoclinic point, closing lemma, chaos}

\begin{abstract}
We study the dynamics of a generic automorphism $f$ of a Stein manifold with the density property.  Such manifolds include almost all linear algebraic groups.  Even in the special case of $\C^n$, $n\geq 2$, most of our results are new.  We study the Julia set, non-wandering set, and chain-recurrent set of $f$.  We show that the closure of the set of saddle periodic points of $f$ is the largest forward invariant set on which $f$ is chaotic.  This subset of the Julia set of $f$ is also characterised as the closure of the set of transverse homoclinic points of $f$, and equals the Julia set if and only if a certain closing lemma holds.  Among the other results in the paper is a generalisation of Buzzard's holomorphic Kupka-Smale theorem to our setting.  
\end{abstract}

\maketitle

\tableofcontents

\section{Introduction and main results} 
\label{sec:intro}

\noindent
This paper continues the program of research, built on the groundbreaking work of Forn\ae ss and Sibony in \cite{FS1997}, that began with our previous papers \cite{AL2019, AL2020, AL2022}.  Here, we investigate the dynamics of a generic automorphism of a Stein manifold $X$ with the density property.  Genericity, of course, is with respect to the compact-open topology on the automorphism group $\Aut\, X$, which is separable and defined by a complete metric.  Roughly speaking, a Stein manifold has the density property if it has many complete holomorphic vector fields and hence, by integrating such fields, many automorphisms.  For an overview of Anders\'en-Lempert theory, which is the theory of Stein manifolds with the density property or one of its variants, see \cite[Chapter 4]{Forstneric2017} or \cite{FK2021}.  The density property turns out to naturally fit into dynamical arguments, allowing us to develop a rich theory of reversible holomorphic dynamics.  The prototypical example of a Stein manifold with the density property is $\C^n$, $n\geq 2$.  For more examples, see Remark \ref{r:main-remark}(1) below.

The ideal picture that we work towards, but do not conjecture because it may be too simplistic, is that a generic automorphism $f$ of $X$ is chaotic on the complement of the union of the basins of its attracting and repelling\footnote{By the {\it basin} of a repelling cycle we mean the basin of attraction of the inverse map.} cycles.  Using our previous work on closing lemmas in \cite{AL2020}, we identify the largest subset of $X$ on which $f$ is chaotic.  This subset is defined as the closure of the set of saddle periodic points of $f$.  It is also characterised as the closure of the set of transverse homoclinic points of $f$.  We call it the {\it chaotic Julia set} of $f$ and denote it by $J_f^*$.

The following theorem contains the main results of this paper.  Even in the special case of $\C^n$, $n\geq 2$, the theorem is new, except for part (a), which is a generalisation of Buzzard's holomorphic Kupka-Smale theorem for $\C^n$, $n\geq 2$ \cite{Buzzard1998}.  A key ingredient in the proof is Theorem \ref{t:stable-unstable}, which builds on the work of Peters, Vivas, and Wold in \cite{PVW2008}.  The notation used below is explained at the end of the introduction.

\begin{theorem}  \label{t:main-theorem}
A generic automorphism $f$ of a Stein manifold $X$ with the density property has the following properties.
\begin{enumerate}
\item[(a)]  Every periodic point of $f$ is hyperbolic and every homoclinic or heteroclinic point is transverse.
\item[(b)]  The forward Julia set $J_f^+$ is connected, has empty interior, is the boundary of each connected component of the basin of attraction of every attracting cycle of $f$, and is not an embedded topological manifold at any of its points.  Also, $f$ has a saddle fixed point $q$ such that $J_f^+$ is the closure of the stable manifold $W_f^s(q)$ of $f$ through $q$.
\item[(c)]  The forward Fatou set $F_f^+=X\setminus J_f^+$ equals the subset $\rne(f)$ on which $f$ is robustly non-expelling and is the union of the basins of attraction of the attracting cycles of $f$ and the non-recurrent Fatou components of $f$.  Every connected component of $F_f^+$ is Stein.
\item[(d)]  The non-wandering set $\Omega_f$ and the Julia set $J_f$ are not compact (hence not empty), have empty interior, and
\[ \Omega_f = J_f \cup \att(f) \cup \rep(f). \]
\item[(e)]  The set $C_f$ of chain-recurrent points is the complement of the union of  the proper basins of the attracting and repelling cycles of $f$.
The chain-recurrence classes are the attracting cycles, the repelling cycles, and the set
\[ \big(J_f^+ \cup \nrc(f)\big) \cap \big(J_f^- \cup \nrc(f^{-1})\big), \]
where $\nrc(f)$ denotes the union of the non-recurrent Fatou components of $f$.
\item[(f)]  The chaotic Julia set $J_f^*=\overline{\sad(f)}\subset J_f$ is perfect and not compact, and $f$ is chaotic on $J_f^*$.  In fact, $J_f^*$ is the largest forward invariant  subset of $X$ on which $f$ is chaotic.  Also, $J_f^*$ is the closure of the set of transverse homoclinic points of $f$.
\end{enumerate} 
\end{theorem}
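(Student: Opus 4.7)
The plan is to prove the six statements as a cascade rather than independently: (a) provides the Kupka--Smale genericity hypothesis, (b)--(e) exploit the resulting structure of stable and unstable manifolds and of Fatou components by assembling results from our previous papers, and (f) is the main new contribution which I expect to be the principal obstacle.

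For part (a), I would follow Buzzard's strategy in \cite{Buzzard1998} but replace the explicit $\C^n$-shears he uses by time-$t$ flows of complete holomorphic vector fields, which are available in abundance on $X$ thanks to the density property. Fix a compact exhaustion $\{K_n\}$ of $X$ and a period $p$. The set $U_{n,p}\subset \Aut X$ of automorphisms all of whose periodic points of period dividing $p$ contained in $K_n$ are hyperbolic is open by continuity of the zero set of $f^p-\id$ and of the derivative cocycle. For density, near any non-hyperbolic periodic orbit one perturbs $f$ on a small neighbourhood of a single point of the orbit by composing with the time-$t$ map of a suitable complete vector field, engineering the eigenvalues of the differential at will. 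A Baire intersection over $n$ and $p$ yields hyperbolicity of every periodic point. Transversality of homoclinic and heteroclinic intersections is handled by the same perturbation scheme applied to pairs of stable and unstable manifolds.

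Granted (a), parts (b)--(e) are essentially a matter of assembling the Fatou/Julia machinery of \cite{AL2019, AL2020, AL2022}. The existence of a saddle fixed point $q$ for generic $f$ follows from (a) together with a standard density argument. Theorem \ref{t:stable-unstable} then provides a dense stable manifold $W_f^s(q)\subset J_f^+$, from which the topological statements in (b)---connectedness, empty interior, coincidence with the boundary of each attracting basin component, and failure to be a topological manifold at any point---are all consequences; the non-manifold property in particular reflects the local fractal structure generated by transverse homoclinic intersections. Part (c) identifies $F_f^+$ with $\rne(f)$ and classifies its components as basins of attracting cycles or non-recurrent components using the Steinness result and Fatou-component classification from our previous work. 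Part (d) is a formal consequence of applying (c) symmetrically to $f$ and $f^{-1}$. Part (e), describing chain recurrence, rests on the closing lemma of \cite{AL2020}: pseudo-orbits inside the ``common'' non-wandering set $(J_f^+\cup\nrc(f))\cap(J_f^-\cup\nrc(f^{-1}))$ can be closed to true periodic orbits, so this set forms a single chain-recurrence class, with the remaining classes being the attracting and repelling cycles themselves.

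Part (f) is where the real work lies and is, I expect, the main obstacle. By definition $J_f^*=\overline{\sad(f)}$ is closed and forward invariant; perfectness follows once one shows that every saddle is accumulated by other saddles, which is produced by the holomorphic Smale horseshoe constructed at each transverse homoclinic point as in \cite{PVW2008}. To establish chaos in the sense of Devaney on $J_f^*$, density of periodic points is automatic, so the real burden is topological transitivity; this I would obtain by a Lambda-lemma argument that links any two saddle points by following their stable and unstable manifolds through the transverse homoclinic and heteroclinic intersections guaranteed by (a). Maximality of $J_f^*$ among forward invariant subsets on which $f$ is chaotic follows because density of periodic points in a chaotic set forces every such periodic point to be of saddle type, since attracting or repelling periodic points cannot lie in an infinite chaotic invariant set. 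Finally, the two-way inclusion between $\overline{\sad(f)}$ and the closure of the set of transverse homoclinic points is immediate: the horseshoe shows every transverse homoclinic point lies in $J_f^*$, and by (a) every saddle has transverse homoclinic points. The subtle issue throughout is that the classical dynamical arguments, usually formulated for $C^1$ diffeomorphisms of compact manifolds, must be run on the non-compact Stein manifold $X$; the escape-to-infinity problem is controlled by restricting arguments to the closed invariant set $J_f^*$ itself, where saddle periodic points are dense and supply the needed recurrence.
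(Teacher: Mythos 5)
Your overall decomposition mirrors the paper's, but there are two genuine gaps, both traceable to misreading what part (a) and the closing lemma actually deliver.

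First, in part (e) you propose to close pseudo-orbits inside $(J_f^+\cup\nrc(f))\cap(J_f^-\cup\nrc(f^{-1}))$ to true periodic orbits using the closing lemma of \cite{AL2020}. That closing lemma only applies to \emph{tame} non-wandering points, and whether every point of $J_f$ is tame for generic $f$ is precisely one of the open questions the paper identifies in Section~\ref{sec:closing} (it is equivalent to $J_f^*=J_f$). The paper avoids the closing lemma entirely for (e): it deduces chain recurrence from Corollary~\ref{c:NW}, i.e.\ from the lambda lemma applied to the saddle fixed point $q$ of Theorem~\ref{t:stable-unstable}, which lets one chain from a point to its $\omega$-limit set in $J_f^+$, through a neighbourhood of $q$, and back to an $\alpha$-limit point.

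Second, in part (f) you assert that ``by (a) every saddle has transverse homoclinic points.'' This is false: (a) only says that homoclinic and heteroclinic points, \emph{if they exist}, are transverse; it produces none. Producing them is the content of Theorem~\ref{t:homoclinic-perturbation}, which uses the density property (via Varolin's theorem) to perturb $f$ near a saddle so as to force a transverse stable--unstable intersection, followed by a Baire argument over an open cover of $X$. The same misreading affects your route to topological transitivity of $f|_{J_f^*}$: you want to link saddles through transverse heteroclinic intersections guaranteed by (a), but such intersections are not guaranteed. The paper instead establishes chaos on $J_f^*$ by a Baire category argument over the open sets $S_{m,n}\subset\Aut X$ of automorphisms with a saddle cycle through $U_m$ and $U_n$, using density of $S_{m,n}$ as in \cite{AL2019}. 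Your proof of perfectness via horseshoes from \cite{PVW2008} is also misattributed (that paper concerns attracting basins of volume-preserving automorphisms, not horseshoes); the paper's proof is much more elementary: an isolated point of $J_f^*$ would have to be a saddle cycle, but since $J_f^*$ is non-compact it cannot be a single cycle, contradicting chaos. Your argument for maximality of $J_f^*$ and the overall architecture of (a)--(d) are sound and close to the paper's.
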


\begin{remark}   \label{r:main-remark}
(1)  If $X$ and $Y$ are Stein manifolds with the density property, then so are $X\times Y$, $X\times\C$, and $X\times\C^*$.  A Stein manifold with the density property is Oka and homogeneous (in the sense that its automorphism group acts transitively on it).  No open Riemann surface has the density property.  It is a long-standing open question whether $\C^{*n}$, $n\geq 2$, has the density property.

Most known examples of Stein manifolds with the density property are captured by the following theorem of Kaliman and Kutzschebauch \cite[Theorem 1.3]{KK2017}.  Let $X$ be a connected affine homogeneous space of a linear algebraic group (for example, a connected linear algebraic group).  If $X$ is not isomorphic to $\C$ or $\C^{*n}$ for some $n\geq 1$, then $X$ has the algebraic density property and therefore also the density property.  For the full list of known Stein manifolds with the density property, see \cite[Section 2.1]{FK2021}.

(2)  A main goal of our work has been to reconcile the \lq\lq attracted versus recurrent\rq\rq\ picture of general dynamics and the \lq\lq calm versus wild\rq\rq\ picture of holomorphic dynamics.  Theorem \ref{t:main-theorem}(d) says that the non-wandering set, a key feature of the former picture, and the Julia set, a key feature of the latter picture, are essentially the same in the generic case.  The analogous result for endomorphisms of Oka-Stein manifolds (which include all Stein manifolds with the density property) is \cite[Theorem 1(d)]{AL2020}.

(3)  A recurrent Fatou component $W$ of an automorphism $f$ of a Stein manifold $X$ is a connected component of $F_f^+$ with a point that has an $\omega$-limit point $p$ in $W$.  Then $W$ is periodic.  If $X$ has the density property and $f$ is generic, then $p$, being a non-wandering point in $F_f^+ = \rne(f)$, is an attracting periodic point \cite[Theorem 2(a)]{AL2020}\footnote{In the proof of \cite[Theorem 2(a)]{AL2020}, we only stated that a non-wandering point of $f$ in $\rne(f)$ is periodic.  Since small perturbations of $f$, obtained using Anders\'en-Lempert theory, remain robustly non-expelling near the point, it must be attracting.} of minimal period $k$, say.  Hence, $W$ is the basin of attraction of one of the points in the cycle of $p$, viewed as an attracting fixed point of $f^k$.  (For automorphisms, as opposed to endomorphisms in general, the basin of attraction of an attracting fixed point is connected.)  Thus,
\[ F_f^+=\bas(f)\sqcup \nrc(f), \]
so $X=F_f^+\sqcup J_f^+$ is partitioned as 
\[ X=\bas(f) \sqcup \nrc(f) \sqcup \overline{W_f^s(q)}, \]
where $q$ is a saddle fixed point of $f$ as in Theorem \ref{t:main-theorem}(b).

(4)  We do not know whether a generic automorphism of a Stein manifold with the density property can have non-recurrent Fatou components.  A related question is open for endomorphisms of an Oka-Stein manifold, see \cite[Section 3]{AL2022}.  If a generic automorphism $f$ has no non-recurrent Fatou components, then by Theorem \ref{t:main-theorem},
\[ C_f=\Omega_f=J_f \cup \att(f) \cup \rep(f). \]

(5)  Another open question is  whether saddle periodic points are dense in the Julia set of a generic automorphism $f$, that is, whether $J^*_f=J_f$.  In Section \ref{sec:closing}, we point out that answering this question in the affirmative is tantamount to establishing a variant of the closing lemma.  By Theorem \ref{t:equality-of-Julia-sets} below, the \lq\lq generic large-cycles closing lemma\rq\rq\ (which is open) is equivalent to any, and hence all, of the following properties for a generic automorphism $f$ of $X$: $J_f^* = J_f$; $f$ is chaotic on $J_f$; periodic points are dense in $J_f$.
\end{remark}

By (4) and (5) above, if the generic large-cycles closing lemma holds, and if a generic automorphism has no non-recurrent Fatou components, then a generic automorphism is chaotic  on the complement of the union of the basins of its attracting and repelling cycles.

We conclude the introduction with a list of notation, mostly the same as established in \cite{AL2022}, an exception being that the right definition of the Julia set of an automorphism is different from the definition that is appropriate in the context of endomorphisms.  In what follows, let $X$ be a complex manifold and let $f$ be an automorphism of $X$.
\begin{itemize}
\item  $\hyp(f)$ is the set of hyperbolic periodic points of $f$.  A periodic point $p$ of $f$ of period $n$ is hyperbolic if the derivative $d_p f^n$ of the $n^\textrm{th}$ iterate $f^n$ at $p$ has no eigenvalue of absolute value $1$.  A periodic point $p$ of minimal period $n$ is transverse if $1$ is not an eigenvalue of $d_p f^n$.
\item  $\att(f)$ is the set of attracting periodic points of $f$.  The periodic point $p$ is attracting if all the eigenvalues of $d_p f^n$ have absolute value less than $1$.
\item  $\rep(f)$ is the set of repelling periodic points of $f$.  The periodic point $p$ is repelling if all the eigenvalues of $d_p f^n$ have absolute value greater than $1$.
\item  $\sad(f)$ is the set of saddle periodic points of $f$.  The periodic point $p$ is a saddle point if it is hyperbolic and some of the eigenvalues of $d_p f^n$ have absolute value less than $1$ and some have absolute value greater than $1$.
\item  $\Omega_f$ is the non-wandering set of $f$, that is, the set of points $p\in X$ such that for every neighbourhood $U$ of $p$, there is $k\geq 1$ such that $U\cap f^k(U)\neq\varnothing$.  Note that $\Omega_f$ is a closed subset of $X$.  Also, $\Omega_{f^{-1}} = \Omega_f$.
\item  $F_f^+$ is the forward Fatou set of $f$, the open set of normality of the forward iterates of $f$.  More explicitly, $F_f^+$ is the set of points in $X$ with a neighbourhood $U$ such that every subsequence of the sequence of forward iterates of $f$ has a subsequence that converges locally uniformly on $U$ to a holomorphic map into $X$ or to the point at infinity.
 The backward Fatou set of $f$ is $F_f^-=F_{f^{-1}}^+$.
\item  $J_f^+=X\setminus F_f^+$ is the forward Julia set of $f$ and $J_f^-=X\setminus F_f^-$ is the backward Julia set of $f$. 
\item  $J_f = J_f^+\cap J_f^- = X\setminus (F_f^+ \cup F_f^-)$ is the Julia set of $f$.  Since $f$ is an automorphism, the sets $F_f^+, F_f^-,J_f^+, J_f^-, J_f$ are all completely invariant, and coincide with the corresponding sets of any iterate $f^n$, $n\neq 0$.
\item  $\rne(f)$ is the open set of points $p\in X$ at which $f$ is robustly non-expelling, meaning that there are neighbourhoods $U$ of $p$ in $X$ and $V$ of $f$ in $\Aut\, X$ and a compact subset $K$ of $X$ such that $g^j(U)\subset K$ for all $g\in V$ and $j\geq 0$.  If $X$ is Stein, by Montel's theorem, $\rne(f)\subset F_f^+$.
\item  $\bas(f)$ is the union of the basins of attraction of the attracting cycles of $f$.
\item $\nrc(f)$ is the union of the non-recurrent Fatou components of $f$.
\end{itemize}

\noindent
{\bf Acknowledgement.}  The authors thank John Erik Forn\ae ss for helpful suggestions concerning homoclinic points.  We also thank an anonymous referee for comments that helped us improve the exposition.

\section{Preliminaries on stable manifolds} 
\label{sec:stable}

\noindent
We briefly recall basic definitions and results on stable manifolds (see for example \cite{PdM1982}).  Let $X$ be a complex manifold of dimension $n$,  let $h$ be an automorphism of $X$, and let $p$ be a saddle fixed point of $h$.  Let $d$ be a distance inducing the topology on $X$.  It is well known that the saddle point moves continuously when $h$ is perturbed.

\begin{lemma}   \label{l:saddlepert}
Let $K\subset X$ be a compact subset containing $p$ in its interior.  Then there exists a neighbourhood $U$ of $p$ and $\epsilon>0$ such that every $\tilde h\in\Aut\,X$ with $d_K(h,\tilde h)<\epsilon$ has a unique fixed point $\eta(\tilde h)$ in $U$, which is a saddle point with the same stable and unstable dimensions as $p$.  Moreover, if $(h_j)$ is a sequence in $\Aut\,X$ such that $d_K(h_j,h)\to 0$, then $\eta(h_j)\to p$.
\end{lemma}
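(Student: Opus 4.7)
The plan is to reduce the claim to a standard application of the implicit function theorem in a holomorphic chart around $p$, using Cauchy estimates to pass from compact-open convergence to $C^k$ convergence of derivatives.

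First I would choose a biholomorphic chart $\phi\colon W\to\C^n$ from an open neighbourhood $W$ of $p$ in the interior of $K$ to an open neighbourhood of $0$ in $\C^n$, with $\phi(p)=0$, and fix relatively compact open sets $W''\Subset W'\Subset W$ still containing $p$.  In this chart, $h$ is represented by a holomorphic map $H$ with $H(0)=0$ whose differential $A:=d_0H$ has no eigenvalue of modulus $1$, with $s$ eigenvalues of modulus less than $1$ and $n-s$ of modulus greater than $1$.  In particular, $I-A$ is invertible, which is the input needed for the implicit function theorem.  Any $\tilde h$ with $d_K(h,\tilde h)$ sufficiently small is then represented on $\phi(W')$ by a holomorphic map $\tilde H$ within $\epsilon$ of $H$ in sup-norm, so in particular $\tilde H(\phi(W''))\subset\phi(W)$ and the equation $\tilde H(x)=x$ is well defined on $\phi(W'')$.

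Second, by the Cauchy integral formula, compact-open closeness of $\tilde H$ to $H$ on $\phi(W')$ implies $C^1$-closeness on $\phi(W'')$, so $I-d_x\tilde H$ remains invertible with a uniformly bounded inverse for $x$ near $0$, and $\tilde H(0)$ is close to $0$.  Consider the contraction
\[ T_{\tilde H}(x)=x-(I-d_0\tilde H)^{-1}(x-\tilde H(x)) \]
on a small closed ball $\overline{B}\subset\phi(W'')$ about $0$.  A short calculation writing $\tilde H(x)=\tilde H(0)+d_0\tilde H\cdot x+\tilde R(x)$, with $\tilde R$ of order $o(|x|)$ uniformly in $\tilde H$ close to $H$, shows that $T_{\tilde H}$ maps $\overline B$ into itself and is a contraction, provided $\epsilon$ is small and $B$ is small.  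The unique fixed point $x(\tilde H)$ in $\overline B$ pulled back by $\phi$ gives the unique fixed point $\eta(\tilde h)\in U:=\phi^{-1}(B)$, and the Banach fixed point argument shows $x(\tilde H)\to 0$ as $\tilde H\to H$ uniformly, which yields the continuity statement $\eta(h_j)\to p$ when $d_K(h_j,h)\to 0$.  Alternatively, one can apply the implicit function theorem to the map $(\tilde H,x)\mapsto \tilde H(x)-x$ at $(H,0)$, which is equally clean.

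Third, I would upgrade $\eta(\tilde h)$ to a saddle point with the same stable and unstable dimensions as $p$.  Since $\eta(\tilde h)\to p$ and $d\tilde H\to dH$ uniformly on a neighbourhood of $0$ as $\tilde H\to H$ (Cauchy again), we have $d_{\eta(\tilde h)}\tilde h\to A$ in operator norm.  Eigenvalues depend continuously on the matrix, so none of the $n$ eigenvalues of $d_{\eta(\tilde h)}\tilde h$ crosses the unit circle for $\epsilon$ small enough, and they split into exactly $s$ and $n-s$ eigenvalues inside and outside, respectively.  Hence $\eta(\tilde h)$ is a saddle with the same stable and unstable dimensions as $p$.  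The main issue to be careful about is that $\Aut X$ is infinite-dimensional with only the compact-open topology, so one cannot invoke a Banach-space implicit function theorem parametrically in $\tilde h$; instead, all parametric dependence is handled \emph{pointwise} in $\tilde h$, with uniformity provided by Cauchy estimates on the fixed compacts $\phi(\overline{W''})\subset\phi(W')$.  This is the only subtle point; everything else is standard finite-dimensional analysis.
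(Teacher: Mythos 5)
The paper does not actually prove Lemma~\ref{l:saddlepert}; it states it as a well-known fact from stable manifold theory (pointing to \cite{PdM1982}). Your argument---a quasi-Newton contraction $T_{\tilde H}(x)=x-(I-d_0\tilde H)^{-1}(x-\tilde H(x))$ in a chart, with Cauchy estimates upgrading $C^0$-closeness on $K$ to $C^1$-closeness on a smaller compact set, followed by continuity of eigenvalues to preserve the saddle type and stable/unstable dimensions---is exactly the standard proof and is correct, including your remark that uniformity in $\tilde h$ is obtained from the Cauchy estimates rather than from a parametric implicit function theorem on the non-Banach space $\Aut X$.
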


Let $E^s \subset T_pX$ (resp.~$E^u$) be the vector subspace spanned by the generalised eigenvectors corresponding to eigenvalues with absolute value stricly smaller (resp.~larger) than 1.  In suitable holomorphic coordinates centred at $p$, we have $E^s={\rm span}\{e_1,\dots, e_{m}\}$ and  $E^u={\rm span}\{e_{m+1},\dots, e_n\}$.  Denote by $\Delta^n(0,r)$ the polydisc of radius $r>0$ in such coordinates.  The local stable and unstable manifolds of $h$ at $p$ are defined as 
\[ \Gamma^s_h(p,r):=\{z\in \Delta^n(0,r) : h^j(z)\in \Delta^n(0,r) \textrm{ for all } j\geq 0\} \]
and
\[ \Gamma^u_h(p,r):=\{z\in \Delta^n(0,r) : h^{-j}(z)\in \Delta^n(0,r)\textrm{ for all } j\geq 0\}. \]
If $r$ is sufficiently small, then $\Gamma^s_h(p,r)$ is contained in the stable manifold
\[ W^s_h(p):=\{z\in X: h^j(z)\to p \textrm{ as } j\to\infty\}, \] 
and $\Gamma^u_h(p,r)$ is contained in the unstable manifold
 \[ W^u_h(p):=\{z\in X: h^{-j}(z)\to p \textrm{ as } j\to\infty\}. \] 
Moreover, if $r$ is sufficiently small, the local stable manifold is the   graph  of a holomorphic map $\varphi: \Delta^m(0,r)\to \Delta^{n-m}(0,r)$ with $d_0\varphi=0$.  Clearly, every orbit converging to $p$ is eventually contained in $\Gamma^s_h(p,r)$, so 
\[ \bigcup_{j\geq 0}h^{-j}(\Gamma^s_h(p,r)) = W^s_h(p),\] 
which shows that the stable manifold is an  immersed complex submanifold biholomorphic to  $\C^m$ and  tangent to $E^s$ at $p$.  Analogous considerations hold for the unstable manifold.  Next we consider how the stable manifold changes under small perturbations of the automorphism $h$.

\begin{lemma}   \label{l:delicate}
If $r>0$ is small enough, the following holds.  Let $K\subset X$ be compact and contain $\overline{\Delta^n(0,r)}$ in its interior.  Let $(h_j)$ be a sequence in $\Aut\,X$ with $d_K(h_j,h)\to 0$.  For $j$ large enough, let $p_j:=\eta(h_j)$ be the saddle fixed point given by Lemma \ref{l:saddlepert}.  Then there is a holomorphic map $\varphi_j: \Delta^m(0,r)\to \Delta^{n-m}(0,r)$, whose graph is contained in the stable manifold $W^s_{h_j}(p_j)$, such that $\varphi_j$ converges to $\varphi$ uniformly on $\Delta^m(0,r)$ as $j\to\infty$.
\end{lemma}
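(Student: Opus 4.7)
My plan is to apply the classical local stable manifold theorem \emph{uniformly} to the perturbed automorphisms $h_j$ and then extract convergence through a normal families argument, identifying the limit via the characterisation of the local stable manifold as the set of forward-trapped points.

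By Lemma \ref{l:saddlepert}, for $j$ large there are saddle fixed points $p_j := \eta(h_j) \to p = 0$ with the same stable and unstable dimensions as $p$. Their stable and unstable subspaces $E^s_j, E^u_j \subset T_{p_j}X$ vary continuously with $h_j$, since the relevant eigenvalues of $d_{p_j} h_j$ stay uniformly bounded away from the unit circle. Choose biholomorphisms $\Phi_j \to \operatorname{id}$ on a neighbourhood of $\overline{\Delta^n(0,r)}$ sending $p_j$ to $0$ and $E^s_j, E^u_j$ to the standard $E^s, E^u$, and replace $h_j$ by $\tilde h_j := \Phi_j \circ h_j \circ \Phi_j^{-1}$, so that each $\tilde h_j$ has origin as a saddle fixed point with the standard splitting and $\tilde h_j \to h$ uniformly near $\overline{\Delta^n(0,r)}$. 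A quantitative Hadamard--Perron theorem --- with estimates depending only on the hyperbolicity gap of $d_0 h$ --- then produces $r > 0$ and $j_0$ such that for all $j \geq j_0$, $\Gamma^s_{\tilde h_j}(0,r)$ is the graph of a holomorphic map $\tilde\varphi_j : \Delta^m(0,r) \to \Delta^{n-m}(0,r)$ with $\tilde\varphi_j(0) = 0$ and $d_0 \tilde\varphi_j = 0$.

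The family $\{\tilde\varphi_j\}$ is uniformly bounded and hence normal; let $\tilde\varphi_\infty$ be any locally uniform sublimit on $\Delta^m(0,r)$. For $z \in \Delta^m(0,r)$ and $w_j := (z, \tilde\varphi_j(z))$, the containment $\tilde h_j^k(w_j) \in \overline{\Delta^n(0,r)}$ for all $k \geq 0$ passes to the limit --- by uniform convergence of $\tilde h_j \to h$ on a compact neighbourhood of $\overline{\Delta^n(0,r)}$ --- to give $h^k(z, \tilde\varphi_\infty(z)) \in \overline{\Delta^n(0,r)}$ for all $k \geq 0$. For $r$ small enough, this forward-trapping condition forces $(z, \tilde\varphi_\infty(z))$ to lie in the graph of $\varphi$, so $\tilde\varphi_\infty = \varphi$; as every sublimit coincides with $\varphi$, the full sequence converges uniformly on $\Delta^m(0,r)$. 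Undoing the conjugation through $\Phi_j^{-1} \to \operatorname{id}$ and relabelling to a slightly smaller radius yields $\varphi_j$ satisfying the conclusion. The main obstacle is the uniform radius: one must verify that the contraction constants in the graph-transform proof of Hadamard--Perron depend only on quantities determined by $h$ at $0$, so that the construction applies simultaneously to all $h_j$ with $j$ large; this is routine but relies crucially on the smallness of $r$ and on the splitting having been put in standard form by the $\Phi_j$.
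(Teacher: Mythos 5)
The paper states Lemma~\ref{l:delicate} without proof, treating it as a standard consequence of the stable manifold theorem (the section opens with a reference to \cite{PdM1982}, and only Lemma~\ref{l:stable-in-julia} is given a proof, explicitly ``for want of a reference''). So there is no in-text argument to compare against, and the question is simply whether your reconstruction is sound.

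It is. The skeleton --- conjugate by $\Phi_j \to \operatorname{id}$ to normalise the fixed point and the invariant splitting, invoke a quantitative Hadamard--Perron/graph-transform theorem whose radius and Lipschitz constants depend only on the spectral gap of $d_0 h$ and on the $C^1$-size of the nonlinear part, so that a single $r>0$ works for all $j$ large, then use Montel together with the uniqueness of the locally invariant graph to identify every sublimit as $\varphi$ --- is exactly the standard route, and you are right to single out the uniformity of $r$ as the only point that needs care. A few minor remarks. First, when you pass the trapping condition to the limit you get $h^k(z,\tilde\varphi_\infty(z))\in\overline{\Delta^n(0,r)}$ rather than the open polydisc; this is harmless because for $r$ small the forward-trapped set in the \emph{closed} polydisc is still a graph extending $\varphi$, but it is worth saying explicitly, and it is another reason to run the graph transform on a slightly larger polydisc $\Delta^n(0,r')$ from the outset, which you already do implicitly with the ``slightly smaller radius'' remark; that also upgrades the locally uniform convergence from Montel to the uniform convergence on $\Delta^m(0,r)$ asserted in the lemma. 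Second, $d_K(h_j,h)\to 0$ is only $C^0$ on $K$, so you should observe that Cauchy estimates turn this into $C^1$ (indeed $C^\infty$) convergence on compact subsets of $K^\circ\supset\overline{\Delta^n(0,r)}$, which is what both the continuity of the splittings $E^s_j,E^u_j$ and the uniform graph-transform estimate actually require. With those small clarifications the argument is complete.
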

 
Take $q\in W^s_h(p), q\neq p$.  Choose holomorphic coordinates centred at $q$ such that $T_qW^s_h(p)={\rm span}\{e_1,\ldots, e_m\}$.  By Lemma \ref{l:delicate}, pulling back by $h$, if $r$ is sufficiently small, the stable manifold $W^s_h(p)$ contains the graph of a holomorphic map $\psi: \Delta^m(0,r)\to \Delta^{n-m}(0,r)$ with $d_0\psi=0$.  The following result now follows from the lemma.

\begin{lemma}   \label{l:delicate2}
Let $K\subset X$ be compact and contain $\{p\}\cup \{h^j(q): j\geq 0\}$ in its interior.  If $r>0$ is small enough, the following holds.  Let $(h_j)$ be a sequence in $\Aut\,X$ with $d_K(h_j,h)\to 0$.  For $j$ large enough, let $p_j:=\eta(h_j)$ be the saddle fixed point given by Lemma \ref{l:saddlepert}. Then there is a holomorphic map $\psi_j: \Delta^m(0,r)\to \Delta^{n-m}(0,r)$, whose graph is contained in the stable manifold  $W^s_{h_j}(p_j)$, such that $\psi_j$ converges to $\psi$ uniformly on $\Delta^m(0,r)$ as $j\to\infty$.
\end{lemma}

Next we recall the following well-known result.  For want of a reference we supply a proof.

\begin{lemma}  \label{l:stable-in-julia}
Let $f$ be an automorphism of a complex manifold $X$.  Let $p$ be a saddle fixed point of $f$.  Then $W^s_f(p)\subset J_f^+$ and $W^u_f(p)\subset J_f^-$.
\end{lemma}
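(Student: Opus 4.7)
Proof plan. Since $W^u_f(p) = W^s_{f^{-1}}(p)$, $J_f^- = J_{f^{-1}}^+$, and $p$ is a saddle for $f^{-1}$ as well, it suffices to prove the first inclusion. I argue by contradiction, exploiting the incompatibility between normality at $q$ and expansion in the unstable direction near $p$. Suppose some $q \in W^s_f(p)$ lies in $F_f^+$ and let $U$ be a neighborhood of $q$ on which $\{f^j\}$ is normal. Because $f^j(q) \to p \in X$, every subsequential limit is $X$-valued at $q$, and hence near $q$, so after shrinking $U$ we may assume that $\{f^j|_U\}$ is uniformly bounded; then Cauchy's estimates provide a uniform derivative bound $\|d_y f^j\| \leq C$ on a smaller ball $B \subset U$ around $q$, for all $j \geq 0$.

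Because $p$ is a saddle, the unstable dimension $n - m$ is positive. Pick a small holomorphic $(n-m)$-dimensional disc $D \subset B$ through $q$ whose tangent space at $q$ is complementary to $T_q W^s_f(p)$, and set $\delta := \operatorname{diam} D$. The derivative bound gives $\operatorname{diam} f^j(D) \leq C\delta$ for every $j$. On the other hand, Palis's $\lambda$-lemma (inclination lemma, applied to $f$ as a real-analytic diffeomorphism and to $D$ as a real disc of dimension $2(n-m)$) guarantees that for any fixed small disc $B_0 \subset W^u_f(p) \setminus \{p\}$ near $p$ and any $\varepsilon > 0$, the image $f^j(D)$ contains a disc $\varepsilon$-$C^1$-close to $B_0$ for all sufficiently large $j$. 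In particular, $\operatorname{diam} f^j(D) \geq r_0$ for some $r_0 > 0$ depending only on $B_0$, not on $\delta$. Choosing $\delta < r_0/C$ gives a contradiction.

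The main obstacle is justifying the $\lambda$-lemma in our setting. It is classical in the smooth category and applies here directly once the complex structure on $X$ is forgotten. A self-contained alternative that avoids the $\lambda$-lemma instead uses an invariant hyperbolic splitting $TX|_V = E^s \oplus E^u$ on a neighborhood $V$ of $p$, with a uniform expansion factor $\mu > 1$ on $E^u$ (a standard by-product of the stable manifold theorem): for any $v \in T_q X \setminus T_q W^s_f(p)$ and $j$ large with $q_j := f^j(q) \in V$, the invariance $d_q f^j(T_q W^s_f(p)) = T_{q_j} W^s_f(p) = E^s_{q_j}$ forces $d_q f^j(v)$ to have a non-zero $E^u_{q_j}$-component $w^u_j$; iterating the expansion $k$ times yields $\|d_q f^{j+k}(v)\| \geq c\mu^k \|w^u_j\| \to \infty$ for a constant $c > 0$, contradicting $\|d_q f^{j+k}\| \leq C$.
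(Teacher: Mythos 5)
Your primary argument, via the $\lambda$-lemma and a diameter estimate, is correct but follows a genuinely different route from the paper's. The paper instead constructs a forward-invariant cone field $\mathscr{C}_\bullet$ on a neighbourhood of $p$ such that a suitable iterate $df^k$ maps each cone strictly into the next and at least doubles the norm of vectors in the cone; a tangent vector $v$ at $x \in W^s_f(p)\cap F_f^+$ with $d_xf^m(v)$ in the cone (for $m$ large so that $f^n(x)$ stays near $p$) then satisfies $\|d_xf^{m+kj}(v)\| > 2^j\|d_xf^m(v)\| \to \infty$, contradicting the boundedness of $(\|d_xf^n(v)\|)$ that normality gives. So the paper derives a contradiction at the level of derivative norms, while you derive one at the level of diameters of images of a transverse disc; your route substitutes a citation of the inclination lemma for the paper's elementary cone bookkeeping, and the geometric picture of the stretched disc is vivid. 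Both are valid proofs.

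Your ``self-contained alternative'' has a genuine gap as written: a saddle fixed point does not in general admit a $df$-invariant splitting $TX|_V = E^s\oplus E^u$ on a full neighbourhood $V$ of $p$. The stable manifold theorem gives invariant tangent bundles along $W^s_{\mathrm{loc}}(p)$ and along $W^u_{\mathrm{loc}}(p)$, but no invariant unstable complement at points of $W^s_{\mathrm{loc}}(p)$ other than $p$ itself; such a complement would amount to an invariant unstable plane field near $p$, which typically does not exist (it would require, e.g., smooth linearisation). This is exactly why the paper uses a cone field, which is the robust replacement for an invariant complement: one does not need $d_{q_j}f^k(w^u_j)$ to again lie in a distinguished $E^u$, only for it to remain in an invariant open cone where uniform expansion holds. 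An alternative fix is to work with the invariant quotient bundle $T_{q_j}X / T_{q_j}W^s_{\mathrm{loc}}(p)$ along the local stable manifold, on which $df$ is uniformly expanding, and to show that the image of $v$ in this quotient grows without bound. With either repair, your alternative becomes essentially the paper's proof.
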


\begin{proof}
Let $\|\cdot\|$ be a hermitian metric on $X$.  Suppose that there is $x\in W^s_f(p)\cap F_f^+$.  By normality, for any tangent vector $v\in T_x X$, the sequence $(\|d_xf^n(v)\|)$ is bounded.

Let $\mathscr{C}_p$ be a homogeneous cone in $T_pX$ such that the tangent space to the unstable manifold $W_f^u(p)$ is contained in the interior of $\mathscr{C}_p$ and the tangent space to the stable manifold $W_f^s(p)$ is contained in the interior of $T_p X\setminus \mathscr{C}_p$.  For $k\geq 0$ large enough, $d_pf^k(\mathscr{C}_p)\subset {\rm int}(\mathscr{C}_p)$ and $\|d_pf^k(v)\|> 2\|v\|$ for all $v\in \mathscr{C}_p\setminus\{0\}$.  The cone $\mathscr{C}_p$ can be extended to a cone field $\mathscr{C}_\bullet$ in a neighbourhood $U$ of $p$ such that
\[ d_zf^k(\mathscr{C}_z)\subset {\rm int}(\mathscr{C}_{f(z)}) \quad\textrm{and}\quad \|d_zf^k(v)\|> 2\|v\| \textrm{ for all } v\in \mathscr{C}_z\setminus\{0\} \]
for all $z\in U\cap f^{-1}(U)$.

Since $x\in  W^s_f(p)$, there exists an integer $m$ such that $f^n(x)\in U$ for all $n\geq m$.  Choose a nonzero $v\in T_xX$ such that $d_xf^m(v)\in \mathscr{C}_{f^m(x)}\setminus\{0\}$.  Then
\[\|d_xf^{m+kj}(v)\|> 2^k\|d_xf^m(v)\|\]
for all $j\geq 1$, which contradicts the boundedness of the sequence $(\|d_xf^n(v)\|)$.
\end{proof}

\section{The Kupka-Smale theorem}
\label{sec:Kupka-Smale}

\noindent
In this section we generalise the Kupka-Smale's theorem to our setting.  Here is the first half.

\begin{theorem}   \label{t:Kupka-Smale-1}
For a generic automorphism of a Stein manifold $X$ with the density property, every periodic point is hyperbolic.
\end{theorem}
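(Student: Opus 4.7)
The plan is a Baire category argument. Fix a compact exhaustion $(K_m)$ of $X$ and, for each $m, n \geq 1$, set
\[ \mathscr{A}_{n,m} = \{f \in \Aut\,X : \text{every periodic point of } f \text{ in } K_m \text{ of minimal period} \leq n \text{ is hyperbolic}\}. \]
The intersection $\bigcap_{n,m} \mathscr{A}_{n,m}$ is precisely the set of automorphisms all of whose periodic points are hyperbolic, so since $\Aut\,X$ with the compact-open topology is a Baire space (being separable and completely metrisable, as noted in the introduction), it suffices to show that each $\mathscr{A}_{n,m}$ is open and dense.

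Openness is a routine limit argument. If $(g_j) \to f$ in the compact-open topology with each $g_j \notin \mathscr{A}_{n,m}$, pick non-hyperbolic periodic points $q_j \in K_m$ of $g_j$ of period $l_j \leq n$. Passing to a subsequence, the periods are constant ($=l$) and $q_j \to q \in K_m$. Then $q = f^l(q)$, and $d_q f^l = \lim d_{q_j} g_j^l$ has an eigenvalue of modulus one by continuity of spectra, so $q$ witnesses $f \notin \mathscr{A}_{n,m}$, a contradiction.

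Density is the heart of the matter, and where the density property enters. Given $f$ and a neighbourhood $\mathscr{V}$ of $f$, I would perturb in two stages. \emph{Stage one}: find $g \in \mathscr{V}$ whose $n$-th iterate has only non-degenerate fixed points in $K_m$ (that is, $1$ is not an eigenvalue of $d_p g^n$ at any such $p$). Non-degenerate fixed points are isolated, so the collection $\{p_1,\ldots,p_k\} \subset K_m$ of them is then finite. \emph{Stage two}: perturb $g$ once more so that at each $p_i$, no eigenvalue of $d_{p_i} g^n$ lies on the unit circle. Stage one is a parametric transversality argument applied to the map $\Phi: \Aut\,X \times K_m \to X \times X$, $(g, x) \mapsto (x, g^n(x))$, the goal being to make $\Phi(g,\cdot)$ transverse to the diagonal $\Delta \subset X \times X$. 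The density property, via the Anders\'en--Lempert theorem, supplies the needed richness of $\Aut\,X$: at any finite subset of $X$ one can find automorphisms arbitrarily close to the identity on a prescribed compact set whose 1-jets at those points are freely prescribed (a standard jet-interpolation consequence of the density property; see \cite[Section 2]{FK2021}). This makes $\Phi$ submersive enough in the $\Aut\,X$ variable for generic slices to be transverse to $\Delta$. Stage two is then finite-dimensional: compose $g$ with an automorphism $\psi$ close to $\id$ on $K_m$ that fixes each $p_i$ with chosen 1-jet $d_{p_i}\psi$; the resulting multiplicative perturbation of $d_{p_i} g^n$ can be arranged to move every eigenvalue off the unit circle, since that is a generic linear condition on the jets.

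The main obstacle is stage one, specifically the transversality/jet-interpolation step. Making it precise requires either a direct jet-interpolation statement in the Anders\'en--Lempert framework (producing automorphisms with prescribed 1-jets on a given finite set, close to the identity on a given compact set) or a Sard--Smale argument in the infinite-dimensional space $\Aut\,X$. Either way, the density property is the indispensable input that distinguishes this situation from that of an arbitrary Stein manifold; everything downstream is continuity of spectra and elementary linear algebra.
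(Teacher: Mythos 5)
Your framework (Baire category, openness is routine, density is the content) is correct, and your Stage two is essentially the paper's Step 2. But Stage one, the ``one-shot'' transversality argument for $g^n$, has a genuine gap, and the paper's three-step inductive structure exists precisely to avoid it.

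Here is the obstruction. Write nearby automorphisms as $g = \psi \circ f$ with $\psi$ near $\id$, so that perturbing $g$ amounts to varying $\psi$ through vector fields $v$. If $p$ is a fixed point of $f$ (minimal period $1$, hence also a fixed point of $f^n$), the chain rule gives
\[
\frac{\partial}{\partial\epsilon}\Big|_{\epsilon=0}\bigl((\id+\epsilon v)\circ f\bigr)^n(p) \;=\; \Bigl(\sum_{j=0}^{n-1}(d_pf)^j\Bigr)\,v(p),
\]
because every point in the orbit of $p$ is $p$ itself. If $d_pf$ has a nontrivial $n$-th root of unity $\zeta$ as an eigenvalue, then $\sum_{j=0}^{n-1}\zeta^j = 0$, so the corresponding eigenspace lies in the kernel of $\sum_j(d_pf)^j$; moreover $\zeta^n=1$, so the same eigenspace also lies in the kernel of $d_pf^n - I$. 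Thus the joint map $\Phi(g,x)=(x,g^n(x))$ is \emph{not} transverse to $\Delta$ at $(f,p)$, no matter how rich the $\Aut X$-directions are, because neither the $x$-derivative nor the $g$-derivative can reach the offending direction. Jet-interpolation on a finite set cannot save you here: the failure is a constraint imposed by the repeated visits to $p$, not a lack of available automorphisms. The same problem arises at any fixed point of $g^k$ with $k<n$ and $k\mid n$.

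The paper circumvents this by working inductively on the period. It first achieves transversality for period $1$ alone (no resonance issue there, since the derivative formula reduces to $v(p)$). It then upgrades transverse period-$1$ points to hyperbolic ones, which in particular removes all roots of unity from the spectrum. Only then does it attack period $m$, and it does so by perturbing with vector fields supported away from the (now hyperbolic, hence persistent) lower-period cycles, so the resonance computation above never arises. This localization is also where the density property is used in a more delicate way than you indicate: one needs complete fields that span the tangent spaces near the candidate period-$m$ orbit while being nearly zero on neighbourhoods of the lower-period cycles, and such fields are produced by approximating a localized non-complete field by a sum of complete fields and taking the composition of their time-$t$ flows as an ``algorithm.'' Your jet-interpolation invocation captures the spirit but not the actual mechanism. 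A smaller point: your $\mathscr{A}_{n,m}$ is about minimal period $\leq n$ while fixed points of $g^n$ have minimal period dividing $n$; this mismatch is harmless (pass to $n!$, say), but it is a sign that the bookkeeping for periods needs to mirror the inductive structure.
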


\begin{proof}
The proof is so similar to the proof in \cite{AL2022} of the analogous result for endomorphisms of an Oka-Stein manifold \cite[Theorem 1(a)]{AL2022} that we will only sketch it.

Given a compact subset $K$ of $X$ and an integer $m\geq 1$, let $\mathscr T(m,K)\subset\Aut\,X$ be the open set of automorphisms such that every periodic point in $K$ of $X$ of minimal period at most $m$ is transverse.  Let $\mathscr H(m,K)\subset\Aut\,X$ be the open set of automorphisms of $X$ such that every period point in $K$ of minimal period at most $m$ is hyperbolic.  

We prove, in three steps as in \cite{AL2022}, that for all $K$ and $m$, $\mathscr H(m,K)$ is dense in $\Aut\, X$.  We then exhaust $X$ by compact sets $K_m$ and conclude that every periodic point of an automorphism in the residual subset $\bigcap\limits_{m\geq 1}\mathscr H(m,K_m)$ of $\Aut\, X$ is hyperbolic.

\smallskip\noindent
{\it Step 1.}  {\it For every compact $K\subset X$, $\mathscr T(1,K)$ is dense in $\Aut\, X$.}  This the analogue of \cite[Proposition 1]{AL2022}.  Let $f_0\in\Aut\, X$.  We will produce a perturbation of $f_0$, that is, a continuous map $f:P\to \Aut\, X$, where the parameter space $P$ is a neighbourhood of the origin $0$ in some $\C^k$, with $f(0)=f_0$, such that there are arbitrarily small $t\in P$ with $f_t=f(t)\in \mathscr T(1,K)$.  Equivalently, for arbitrarily small $t\in P$, the map $X\to X\times X$, $x\mapsto (x,f_t(x))$, is transverse to the diagonal $\Delta\subset X\times X$ on $K$.  By the parametric transversality theorem, this holds if the associated map $F:X\times P\to X\times X$, $F(x,t)=(x,f_t(x))$, is holomorphic (or merely $C^1$) and transverse to $\Delta$ on $K\times P$.

Take finitely many complete holomorphic vector fields $v_1,\ldots,v_r$ on $X$ that span each tangent space of $X$ on a neighbourhood $U$ of $f_0(K)$.  Let $\psi_\bullet^j$ be the flow of $v_j$ and let $f_t=\psi_{t_r}^r\circ \cdots\circ\psi_{t_1}^1\circ f_0\in \Aut\,X$, $t\in\C^r$.  Then $\dfrac\partial{\partial t_j}f(x,t)\vert_{(x,0)} = v_j(f_0(x))$ for all $x\in X$, so the associated map $F$ is transverse to the diagonal on $f_0^{-1}(U)\times\{0\}$ and hence on $K\times P$ for a sufficiently small neighbourhood $P$ of $0$ in $\C^k$.

\smallskip\noindent
{\it Step 2.}  {\it For every compact $K\subset X$ and $m\geq 1$, $\mathscr H(m,K)$ is dense in $\mathscr T(m,K)$.}  This is the analogue of \cite[Proposition 2]{AL2022} and the proof is verbatim the same, except with $\End\,X$ replaced by $\Aut\,X$.  The proof of \cite[Proposition 2]{AL2022} refers to the endomorphism case of \cite[Lemma 1]{AL2020}; here we need the automorphism case of the lemma.

\smallskip\noindent
{\it Step 3.}  {\it For every compact $K\subset X$ and $m\geq 2$, $\mathscr T(m,K) \cap \mathscr H(m-1,K)$ is dense in $\mathscr H(m-1,K)$.}  This is the analogue of \cite[Proposition 3]{AL2022} and the proof is the same, except the perturbation $f:X\times (\C^r)^\ell\to X$ of the given automorphism $f_0\in \mathscr H(m-1,K)$ needs to be defined differently.  As in the proof of \cite[Proposition 3]{AL2022}, we define $f_t=f(\cdot, t_1,\ldots,t_\ell) = \phi_{t_1}^{V_1}\circ\cdots\circ \phi_{t_\ell}^{V_\ell}\circ f_0$, but now, $\phi_\bullet^{V_1},\ldots, \phi_\bullet^{V_\ell}$ are holomorphic families of automorphisms, parametrised by $\C^r$, of the kind $\phi_\bullet^V$ described as follows.

In the notation of the proof of \cite[Proposition 3]{AL2022}, were it possible to find finitely many complete holomorphic vector fields $v_1,\ldots,v_r$ on $X$ that span each tangent space of $X$ on a neighbourhood of $\overline V$ and are as close to zero as we wish on $L_1\cup \cdots\cup L_{m-1}$, then we would take $\phi_t^V=\psi_{t_r}^r\circ \cdots\circ\psi_{t_1}^1$, with $\psi_\bullet^j$ being the flow of $v_j$ (as in Step 1).

Without the completeness requirement, the fields $v_1,\ldots,v_r$ can be chosen as above.  Using the density property, then, we approximate $v_j$ uniformly on $\overline V\cup L_1\cup \cdots\cup L_{m-1}$ by a sum $v=u_1+\cdots+u_s$ of complete holomorphic vector fields $u_1,\ldots,u_s$ on $X$ with flows $\eta_\bullet^1,\ldots, \eta_\bullet^s$.  Then $\eta_\tau=\eta_\tau^s\circ\cdots \circ\eta_\tau^1$ is an algorithm for $v$, meaning that $\eta_0=\id_X$ and $\dfrac\partial{\partial\tau}\bigg\vert_{\tau=0}\eta_\tau(x)=v(x)$ for all $x\in X$, and we take $\psi_{t_j}^j=\eta_{t_j}$ and $\phi_t^V=\psi_{t_r}^r\circ \cdots\circ\psi_{t_1}^1$.
\end{proof}

The following result can be used to slightly simplify the proof of Theorem \ref{t:Kupka-Smale-1}.  We include it here for possible use elsewhere.  Under the assumption that the manifold $X$ is Stein, the result is implicit in the proof of \cite[Theorem 4]{KK2008}.

\begin{proposition}
Let $X$ be a connected complex manifold with a set of complete holomorphic vector fields that span each tangent space of $X$.  Then there is a finite such set.
\end{proposition}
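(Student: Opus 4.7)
The plan is to reformulate the statement in sheaf-theoretic terms. By Nakayama's lemma applied at each stalk, a collection of holomorphic vector fields spans $T_pX$ at every $p\in X$ if and only if that collection generates the coherent tangent sheaf $TX$ as an $\mathcal{O}_X$-module. The task therefore becomes: extract from the given pointwise generating family $\mathcal{F}$ a finite subfamily that still generates $TX$ as a sheaf.

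In the Stein case, which is what is implicit in the proof of \cite[Theorem 4]{KK2008}, I would proceed as follows. By Cartan's Theorem A together with a Forster--Ramspott-type finiteness bound, $TX$ is generated as a sheaf by finitely many global sections $V_1,\dots,V_M\in\Gamma(X,TX)$, and by Cartan's Theorem B these generate $\Gamma(X,TX)$ as an $\mathcal{O}(X)$-module. The key intermediate claim is that $\mathcal{F}$ likewise generates $\Gamma(X,TX)$ as an $\mathcal{O}(X)$-module, i.e.\ every global holomorphic vector field on $X$ admits a \emph{global} expression as a finite $\mathcal{O}(X)$-linear combination of elements of $\mathcal{F}$. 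Granted this, an elementary module-theoretic argument concludes: write each $V_i$ as such a finite combination, take the union over $i$ of the finitely many fields from $\mathcal{F}$ that appear, and obtain a finite set $\mathcal{F}'\subset\mathcal{F}$ that generates $\Gamma(X,TX)$, hence generates $TX$ as a sheaf, hence by Nakayama spans $T_pX$ at every $p$.

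The main obstacle is the intermediate claim. Pointwise generation by $\mathcal{F}$ yields a local finite expression for any global section near each point, but assembling these local expressions into a single global finite expression is a genuine local-to-global problem; in the Stein setting one handles it via the vanishing of $H^1$ of an appropriate coherent kernel sheaf (Cartan B). For a general connected complex manifold $X$, where Cartan's theorems are not available, I would try instead to use the pseudogroup $G$ generated by the flows of elements of $\mathcal{F}$: by Sussmann's orbit theorem and the spanning hypothesis, $G$ acts transitively on $X$, so if $v_1,\dots,v_k\in\mathcal{F}$ span on a nonempty open set $U$, then the translates $\{g(U):g\in G\}$ cover $X$ and for each $g$ the pushforwards $g_*v_1,\dots,g_*v_k$ are complete holomorphic vector fields spanning on $g(U)$. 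The non-Stein difficulty is to extract a \emph{finite} subcover from $\{g(U)\}_{g\in G}$: without compactness of $X$ or an automatic descending chain condition for the relevant analytic subsets, a purely covering-based argument will not suffice, and some additional input specific to the geometry of $X$ will be needed.
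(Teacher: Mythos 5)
Your Stein-case argument aims to extract a finite \emph{subset} of the given family $\mathcal{F}$ that still spans, via the intermediate claim that $\mathcal{F}$ generates $\Gamma(X,TX)$ as an $\mathcal{O}(X)$-module. That claim is false, and so is the stronger conclusion you are aiming for. Consider $X=\C^2$ and, for $k\ge 1$, the Weierstrass product $f_k(z_1)=\prod_{j\ge k}\bigl(1-z_1/j\bigr)e^{z_1/j}$, an entire function vanishing exactly at $z_1\in\{k,k+1,\dots\}$. The family $\mathcal{F}=\{\partial/\partial z_1\}\cup\{f_k(z_1)\,\partial/\partial z_2:k\ge 1\}$ consists of complete holomorphic vector fields and spans $T_p\C^2$ at every $p$ (for $z_1$ not a positive integer all $f_k$ are nonzero there; for $z_1=m$ take $f_{m+1}$). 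Yet no finite subset of $\mathcal{F}$ spans everywhere: any finite list $f_{k_1},\dots,f_{k_r}$ all vanish at $z_1=\max k_i$. Correspondingly, $\partial/\partial z_2$ is not a finite $\mathcal{O}(\C^2)$-combination of elements of $\mathcal{F}$, because any finite subfamily $\{f_{k}:k\in J\}$ generates a proper ideal (common zero at $z_1=\max J$). So the Cartan~B argument you propose cannot repair the intermediate claim; it is simply not true. Note that the proposition only asserts the existence of \emph{some} finite set of complete fields spanning every tangent space, not a finite subset of the given one, and the example shows these are genuinely different statements.

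The paper's proof takes a different and more elementary route that produces new fields rather than a subfamily. Start with finitely many complete fields $v_1,\dots,v_m$ spanning $T_pX$ at one point, hence outside a proper closed analytic subset $A$. Pick one point from each of the (countably many) irreducible components of $A$, forming a countable set $D$. The spanning hypothesis makes the $\Aut\,X$-orbits open, so by connectedness $\Aut\,X$ acts transitively; together with the fact that $\Aut\,X$ is Baire and separable, Winkelmann's result on tame discrete sets supplies an automorphism $f$ with $f(D)\cap A=\varnothing$. The pushed-forward complete fields $f^{-1}_*v_1,\dots,f^{-1}_*v_m$ then span at a point of each component of $A$, so adjoining them to $v_1,\dots,v_m$ shrinks the bad set to one of strictly smaller dimension; induction on dimension finishes. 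Your non-Stein suggestion (pushing forward by pseudogroup elements) is in the right spirit — it does produce new complete fields — but, as you note, a finite subcover of $\{g(U)\}$ need not exist on a non-compact $X$. The paper sidesteps the covering problem entirely by exploiting the analytic structure of the bad set and moving all of its components off $A$ at once with a single automorphism.
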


\begin{proof}
Take finitely many complete vector fields $v_1,\ldots,v_m$ that span $T_p X$ for some $p\in X$ and hence for every $p$ outside a proper closed analytic subset $A$ of $X$.  We want to produce finitely many fields that span $T_q X$ for a point $q$ in each irreducible component of $A$ and then induct on dimension.  Let the countable set $D$ consist of one point from each component of $A$.  

By \cite[Corollary 8]{Winkelmann2021}, there is an automorphism $f$ of $X$ with $f(x)\notin A$ for every $x\in D$.  (The corollary applies because by hypothesis, the orbits of $\Aut\,X$ are open, so there is only one orbit.  Also, as for an arbitrary complex manifold, $\Aut\,X$ is completely metrisable, and hence Baire, and separable.)  Then the inverse of $f$ takes a point in $X\setminus A$ into each component of $A$, so the fields $f_*^{-1}v_1,\ldots, f_*^{-1}v_m$ span $T_q X$ for a point $q$ in each component of $A$.
\end{proof}

Here is the second half of the Kupka-Smale theorem.  We will describe how Buzzard's proof for $\C^n$, $n\geq 2$, can be modified so as to work in our setting.

\begin{theorem}   \label{t:Kupka-Smale-2}
For a generic automorphism of a Stein manifold $X$ with the density property, every homoclinic or heteroclinic point is transverse.
\end{theorem}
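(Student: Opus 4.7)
The plan is to imitate Buzzard's argument \cite{Buzzard1998} for $\C^n$, with shears replaced by compositions of flows of complete holomorphic vector fields produced by the density property, exactly as in the proof of Theorem \ref{t:Kupka-Smale-1}. By Theorem \ref{t:Kupka-Smale-1}, we may restrict to the residual subset of $\Aut X$ on which every periodic point is hyperbolic, so that homoclinic and heteroclinic points are precisely the intersections, away from the cycles, of the stable manifold of one saddle periodic point with the unstable manifold of another (possibly equal). Fix a compact exhaustion $K_1\subset K_2\subset\cdots$ of $X$. For each triple $(j,m,N)$ of positive integers, let $\mathscr{TH}(j,m,N)\subset\Aut X$ be the set of $f$ whose periodic points in $K_j$ of minimal period at most $m$ are all hyperbolic, and such that, for every pair $(p,q)$ of saddle periodic points of $f$ in $K_j$ with minimal periods $k_p,k_q\leq m$, the intersection
\[
f^N\big(\Gamma^u_{f^{k_p}}(p,r_p)\big)\cap f^{-N}\big(\Gamma^s_{f^{k_q}}(q,r_q)\big)\cap K_j
\]
is a transverse intersection of complex submanifolds, for fixed sufficiently small $r_p,r_q$. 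Lemmas \ref{l:saddlepert}, \ref{l:delicate}, and \ref{l:delicate2}, together with the stability of transversality, show that $\mathscr{TH}(j,m,N)$ is open. Every homoclinic or heteroclinic point of $f$ lies in such a finite piece once $N$ is large enough, so the countable intersection $\bigcap_{j,m,N}\mathscr{TH}(j,m,N)$, once shown residual, implies the theorem.

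The heart of the proof is density of each $\mathscr{TH}(j,m,N)$. Given $f_0\in\Aut X$, Theorem \ref{t:Kupka-Smale-1} allows us to perturb slightly so that all the relevant periodic points of $f_0$ are hyperbolic, leaving only finitely many saddle pairs $(p_i,q_\ell)$. Handling one pair at a time, we build a holomorphic family $f_t=\psi^s_{t_s}\circ\cdots\circ\psi^1_{t_1}\circ f_0$ for $t$ in a ball $B\subset\C^s$, where each $\psi^k_\bullet$ is the flow of a complete holomorphic vector field $v_k$, and apply the parametric transversality theorem to the map
\[
\Phi(x,y,t)=\big(f_t^{\,N}(\gamma^u_t(x)),\,f_t^{-N}(\gamma^s_t(y))\big),\qquad \Phi:\Delta^{d_u}\times\Delta^{d_s}\times B\to X\times X,
\]
with respect to the diagonal $\Delta\subset X\times X$. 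Here $\gamma^u_t,\gamma^s_t$ are the holomorphic families of graph parametrisations of the local unstable/stable discs at the saddle points $p_i(t),q_\ell(t)$ provided by Lemmas \ref{l:delicate} and \ref{l:delicate2}. If $\Phi$ is transverse to $\Delta$ pointwise on $\Phi^{-1}(\Delta)$, Sard's theorem yields arbitrarily small $t\in B$ for which $\Phi(\cdot,\cdot,t)$ is transverse to $\Delta$; this is exactly transversality of the desired intersection on $K_j$. Iterating over the finitely many pairs, and over the finitely many irreducible components of $\Phi(\cdot,\cdot,0)^{-1}(\Delta)$, produces $f_t\in\mathscr{TH}(j,m,N)$ arbitrarily close to $f_0$.

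The main obstacle is verifying that $\Phi$ itself is transverse to $\Delta$. At a crossing $(x_0,y_0,0)$ mapping to $z\in K_j$, a chain-rule expansion writes $\partial_{t_k}f_t^{\,N}(\gamma^u_0(x_0))|_{t=0}$ and $\partial_{t_k}f_t^{-N}(\gamma^s_0(y_0))|_{t=0}$ as sums along the respective orbits that are linear in the values of $v_k$ at a finite collection $Y$ of relevant orbit and cycle points, and the $t$-variations of $\gamma^u_t,\gamma^s_t$ depend similarly linearly on $v_k$ at the cycles of $p_i,q_\ell$, through the implicit-function-theorem argument behind Lemmas \ref{l:delicate} and \ref{l:delicate2}. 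The key algebraic observation is that for any linear combination $\tilde v=\sum c_k v_k$ with $\tilde v(y)=0$ for every $y\in Y\setminus\{z\}$ and $\tilde v(z)$ any prescribed vector in $T_zX$, every term in both expansions vanishes except the terminal contribution $\tilde v(z)$ coming from the last iterate of $f_t$ in $f_t^{\,N}$. Hence the $c$-variation of $f_t^{-N}(\gamma^s_t(y_0))-f_t^{\,N}(\gamma^u_t(x_0))$ at $z$, modulo $T_z\Delta$, equals $-\tilde v(z)$ and sweeps out all of $T_zX$, yielding transversality of $\Phi$ to $\Delta$ independently of the combined tangent space of the two finite pieces at $z$. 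The technical subtlety is in producing complete fields $v_1,\ldots,v_s$ with this joint-prescribed-value property at $Y$: since $X$ is Stein, holomorphic interpolation yields a (not necessarily complete) holomorphic vector field with any prescribed values on the finite set $Y$, and the density property lets us approximate this field in the compact-open topology by sums of complete fields; as the target $\bigoplus_{y\in Y}T_yX$ is finite-dimensional, a sufficiently good approximation supplies the required joint spanning exactly.
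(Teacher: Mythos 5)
There is a genuine gap in your argument, and it sits in the ``key algebraic observation''. You assert that if $\tilde v=\sum c_k v_k$ vanishes at every point of $Y\setminus\{z\}$ then the $t$-variations of the local-disc parametrisations $\gamma^u_t,\gamma^s_t$ vanish as well, on the grounds that these variations ``depend linearly on $v_k$ at the cycles of $p_i,q_\ell$''. But the local stable or unstable manifold of $f_t$ at a saddle $p$ is determined by the \emph{germ} of $f_t$ near $p$, not by the value $f_t(p)$. Requiring $\tilde v(p)=0$ only keeps the saddle fixed; it does not control $d_p\tilde v$, and already the linear part of $\tilde v$ at $p$ rotates the unstable subspace and hence moves $\gamma^u_t(x_0)$ \emph{normally} to the manifold, so the contribution does not get absorbed by the $x$-derivative of $\Phi$. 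Concretely, with $f_0(x,y)=(2x,y/2)$ on $\C^2$ and $\tilde v(x,y)=(0,x)$ one has $\tilde v(0)=0$, yet the unstable manifold of $f_t=\eta_t\circ f_0$ tilts to first order in $t$, so $\partial_t\gamma^u_t(x_0)$ has a nonzero normal component. Thus $\partial_t\Phi$ contains an uncontrolled extra term $d f_0^{\pm N}\partial_t\gamma^{u/s}_t$, and the claimed identity $\partial_t\Phi\equiv-\tilde v(z)$ modulo $T_z\Delta$ is not justified; prescribing values of $\tilde v$ at a finite set $Y$, which is what your Stein interpolation plus density-property approximation delivers, is not the right data.

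The paper avoids this by following Buzzard directly rather than re-deriving the transversality from vector-field values. It builds a smooth family $\Psi:X\times B\to X$ of diffeomorphisms that is \emph{literally the identity} on a neighbourhood $U_1$ of the cycles and of every orbit point of $q_0$ except $q_0$ itself, and is translation by $z$ on a smaller ball $V_0$ around $q_0$. Then it approximates this family on the Runge set $V=V_0\cup U_1$ by families $\Psi_k$ of automorphisms via the parametric Anders\'en--Lempert theorem \cite[Theorem 1.1]{Forstneric1994}. Because the model family is the identity on a full neighbourhood of the cycles, its local discs are $z$-independent; because the approximation converges together with derivatives on compacta, the parameter derivative of the approximating local discs is $C^1$-small near the cycles, so surjectivity of the relative-motion derivative at $q_0$ survives the approximation. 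To repair your argument along its own lines you would have to force $\tilde v$ to vanish on a whole neighbourhood of each cycle before invoking the density property, and at that point you have essentially reconstructed the paper's (and Buzzard's) family $\Psi$.
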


\begin{proof}
In Buzzard's notation, let $p_1$ and $p_2$ be saddle periodic points of an automorphism $F$ of $X$ and $q_0 \in W^s_F(p_1)\cap W^u_F(p_2)$ be a homoclinic or heteroclinic point.  We need suitable replacements for his families $\Psi_k$, $k\geq 1$, of automorphisms and family $\Psi$ of diffeomorphisms of $X$, defined in \cite[page 501]{Buzzard1998}.

The total $F$-orbit of $q_0$ accumulates only on the union of the cycles of $p_1$ and $p_2$, which is a finite set.   Let $K$ be the compact closure of the orbit.  Let $U$ be a Runge neighbourhood of $K$, consisting of finitely many coordinate balls with mutually disjoint closures, such that $q_0$ is the only point of $K$ in the ball $U_0$ that contains it.  Let $U_1 = U\setminus U_0$.  Let $V_0\subset U_0$ be a smaller ball containing $q_0$ and let $B$ be a closed ball in $\C^n$, $n=\dim X$, centred at the origin, so that $\overline V_0+z\subset U_0$ for all $z\in B$ with respect to the coordinates in $U_0$.  Write $V=V_0\cup U_1$.

Define $\Phi:V\times B\to X$ by the formula $(x,z)\mapsto x+z$ on $V_0\times B$ and by the formula $(x,z)\mapsto x$ on $U_1\times B$.  Extend $\Phi$ to a smooth family $\Psi:X\times B\to X$ of diffeomorphisms.  Use the parametric Anders\'en-Lempert theorem \cite[Theorem 1.1]{Forstneric1994}\footnote{The theorem is stated for $\C^n$, $n\geq 2$, but holds more generally for Stein manifolds with the density property.  The Runge domains in the theorem should then be taken to be Stein (as they are here), but they need not be connected.} to approximate $\Phi$ locally uniformly on $V\times B$ by a sequence of smooth families $\Psi_k:X\times B\to X$ of automorphisms of $X$.  

These families have the properties needed for Buzzard's proof.  The remainder of the proof consists of local arguments and transversality and genericity arguments that straightforwardly extend to our more general setting.
\end{proof}

\section{Julia set and non-wandering set}
\label{sec:Julia}

\noindent
Our next result is the technical heart of the paper.

\begin{theorem}  \label{t:stable-unstable}
A generic automorphism $f$ of a Stein manifold $X$ with the density property has a saddle fixed point $q$ such that
\[ X\setminus\rne(f) = \overline{W_f^s(q)} \qquad \textrm{and} \qquad X\setminus\rne(f^{-1}) = \overline{W_f^u(q)}. \]
\end{theorem}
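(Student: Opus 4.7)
The plan is to split the statement into an easy inclusion and a hard inclusion, and to establish the hard direction through a Baire category argument refining the construction of Peters--Vivas--Wold \cite{PVW2008}. For the easy direction $\overline{W^s_f(q)}\subseteq X\setminus\rne(f)$, I would argue that any $z\in W^s_f(q)$ eventually enters the local stable manifold $\Gamma^s_f(q,r)$, and that using the density property one can produce Anders\'en--Lempert perturbations of $f$, arbitrarily small in the compact-open topology and supported in a tiny neighborhood of some iterate $f^k(z)$, that push $f^k(z)$ off $\Gamma^s_f(q,r)$ along the unstable direction, sending the forward orbit of $z$ outside any prescribed compact set. Hence $z\notin\rne(f)$, and closedness of $X\setminus\rne(f)$ closes this direction; the same reasoning applied to $f^{-1}$ handles $\overline{W^u_f(q)}\subseteq X\setminus\rne(f^{-1})$.

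For the reverse inclusion I would fix a countable basis $\{U_n\}_{n\ge 1}$ of $X$ and consider the set $\mathcal{G}_n\subseteq\Aut X$ consisting of those $f$ such that either $U_n\subseteq\rne(f)$ or $f$ has a saddle fixed point $q$ with $W^s_f(q)\cap U_n\neq\varnothing$; openness follows from Lemmas \ref{l:saddlepert} and \ref{l:delicate2}. The key technical step is a perturbation lemma: given $f_0$ with a saddle fixed point $q_0$, a point $p\in U_n\setminus\rne(f_0)$, a compact $K\subset X$ containing $q_0$ together with its local stable manifold, and $\epsilon>0$, produce $f$ with $d_K(f,f_0)<\epsilon$ whose persisted saddle $q$ from Lemma \ref{l:saddlepert} satisfies $W^s_f(q)\cap U_n\neq\varnothing$. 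Because $p\notin\rne(f_0)$, one can find $g$ close to $f_0$ on an enlarged compact $K'\supset K$, a point $p'\in U_n$, and $j\ge 1$ with $z:=g^j(p')\notin K'$. Applying the parametric Anders\'en--Lempert theorem, one then modifies $g$ in a small ball around $z$, disjoint from $K$ and from the finite orbit segment $p',g(p'),\ldots,g^{j-1}(p')$, to obtain an automorphism $f$ that sends $z$ into $\Gamma^s_{f_0}(q_0,r)$; Lemma \ref{l:delicate2} then places $p'\in W^s_f(q)$. A simpler Anders\'en--Lempert argument plants a saddle fixed point in any given neighborhood, making generic existence of a saddle trivial. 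Baire category then produces a residual subset of $\Aut X$ on which all $\mathcal{G}_n$ hold and at least one saddle fixed point exists.

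The remaining issue is that, a priori, different basic open sets $U_n$ may be met by stable manifolds of different saddle fixed points $q_n$. To pass to one $q$ I would invoke Theorem \ref{t:Kupka-Smale-2} together with the $\lambda$-lemma. A further Baire step, based on an analogous perturbation lemma connecting two prescribed saddles by routed orbits, would show that generically any two saddle fixed points of $f$ of the same Morse index admit transverse heteroclinic orbits in both directions; the $\lambda$-lemma then forces the closures of their stable manifolds (and of their unstable manifolds) to coincide. Only finitely many Morse indices occur, so one index is realised by $q_n$ for $n$ in a subcollection whose $U_n$ still cover $X\setminus\rne(f)$; fixing any single saddle $q$ of that index yields $\overline{W^s_f(q)}=X\setminus\rne(f)$. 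Since such a $q$ is automatically a saddle of $f^{-1}$ with $W^s_{f^{-1}}(q)=W^u_f(q)$, running the symmetric Baire argument simultaneously for $f^{-1}$ and intersecting the two residual sets lets us arrange $\overline{W^u_f(q)}=X\setminus\rne(f^{-1})$ for the same $q$.

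The main obstacle is the perturbation lemma: orchestrating a single Anders\'en--Lempert surgery that stays close to $f_0$ on a prescribed compact set, preserves the saddle $q_0$ together with its local stable manifold, and redirects the chosen orbit into that manifold. The density property is essential for this simultaneously local and global modification, and one must carefully coordinate the escape time $j$ with the relocation point $z$ so that the support of the perturbation is disjoint from the initial orbit segment and from the approximation region $K$.
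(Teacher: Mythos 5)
Your proposal captures the broad strategy of the paper's proof (a Baire category argument over a countable basis, with Anders\'en--Lempert surgery to route an escaping orbit into a stable manifold, and Lemmas \ref{l:saddlepert}, \ref{l:delicate}, \ref{l:delicate2} supplying persistence and openness), but it diverges at two points where I think there are genuine gaps.

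\textbf{The single-saddle issue.} You define $\mathcal{G}_n$ so that the saddle whose stable manifold meets $U_n$ is allowed to depend on $n$, and then try to unify the $q_n$'s a posteriori via transverse heteroclinic connections and the $\lambda$-lemma. This is substantially harder than what the paper does, and as stated it does not go through. First, Theorem \ref{t:Kupka-Smale-2} only says that heteroclinic intersections are \emph{transverse when they exist}; it says nothing about \emph{existence}, which is a separate, non-trivial claim requiring its own perturbation lemma. Second, for a pair of saddles $p,q$, the two conditions $W^s_f(p)\pitchfork W^u_f(q)\neq\varnothing$ and $W^s_f(q)\pitchfork W^u_f(p)\neq\varnothing$ are only simultaneously dimensionally possible when $p$ and $q$ have the same stable dimension, so you are restricted to a single Morse index, and the assertion that the basic sets assigned to one particular index still cover $X\setminus\rne(f)$ is not justified. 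The paper avoids this entirely by baking the choice of saddle into the Baire construction: a countable dense set $\{\psi_j\}$ of automorphisms each with a saddle fixed point, pairwise disjoint basic open sets $B_j\ni\psi_j$ on which the saddle persists to a well-defined $\eta(\cdot)$, and $A=\bigcup_j B_j$ open and dense. The sets $S_n$, $T_n$ then refer to the \emph{same} distinguished saddle $\eta(f)$ for every $n$, so no unification step is ever needed, and you get one fixed $q=\eta(f)$ working simultaneously for $\rne(f)$ and $\rne(f^{-1})$ for free.

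\textbf{The routing step.} You propose to modify $g$ ``in a small ball around $z$, disjoint from $K$,'' so as to ``send $z$ into $\Gamma^s_{f_0}(q_0,r)$.'' But $\Gamma^s_{f_0}(q_0,r)$ lies inside $K$, so a perturbation supported in a ball disjoint from $K$ cannot move $z$ there; if the support of the correcting automorphism $h$ avoids $K$, then so does $h(z)$. The paper's essential observation is that since $W^s_{\tilde f}(\eta(\tilde f))$ is biholomorphic to $\C^m$ it is not contained in any compact, so one can pick $y\in W^s_{\tilde f}(\eta(\tilde f))$ \emph{outside} the compact $H$; one then routes the escaped point $\tilde f^{n_0}(z)\notin H$ to $y\notin H$ by an isotopy in the complement of a Runge neighbourhood $W$ of $H$, approximated by automorphisms close to the identity on $W$. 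The iterates of the perturbed automorphism then carry $y$ into the local stable manifold of their own accord; you never need to push a point into $K$ directly. Without this observation the perturbation cannot both be small on $K$ and achieve the desired relocation.

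A minor remark: for the inclusion $\overline{W^s_f(q)}\subseteq X\setminus\rne(f)$ you do not need any perturbation argument. Lemma \ref{l:stable-in-julia} already gives $W^s_f(q)\subset J_f^+$ by a cone-field/derivative-blow-up estimate for $f$ itself, and $\rne(f)\subset F_f^+$ by Montel, so $\overline{W^s_f(q)}\subset J_f^+\subset X\setminus\rne(f)$ with no use of the density property. Your escape-orbit argument would in any case need to justify that the escaping orbit actually leaves an arbitrary prescribed compact, which is not automatic from merely leaving the local stable manifold chart.
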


Before proving the theorem we need a  lemma.

\begin{lemma}   \label{l:create-point}
Let $X$ be a Stein manifold with the density property and $f$ be an automorphism of $X$.  Let $K\subset X$ be a compact subset and let $\epsilon>0$.  Then there is an automorphism $g$ of $X$ such that $d_K(g,f)<\epsilon$, with a fixed point in $X\setminus K$, which can be chosen to be attracting, saddle, or repelling. 
\end{lemma}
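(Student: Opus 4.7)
The plan is to write $g = \phi\circ f$ for an automorphism $\phi$ of $X$ that is uniformly close to $\id_X$ on $f(K)$ but has prescribed first-order data near one chosen point outside $K$.  In this ansatz, $d_K(g,f) = d_{f(K)}(\phi,\id)$, and a fixed point $p$ of $g$ corresponds to a solution of $\phi(f(p)) = p$, with $d_p g = d_{f(p)}\phi \cdot d_p f$.  Hence it suffices, given a preassigned invertible linear isomorphism $A: T_{f(p_0)}X\to T_{p_0}X$ such that $A\cdot d_{p_0}f$ has no eigenvalue equal to $1$, to produce $\phi \in \Aut\,X$ arbitrarily close to $\id_X$ on $f(K)$ and sending $f(p_0)\in X\setminus K$ to $p_0\in X\setminus K$ with $d_{f(p_0)}\phi = A$.

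I would first choose $p_0\in X\setminus K$ so that $f(p_0)\notin K$, both $p_0$ and $f(p_0)$ lie outside the $\mathcal O(X)$-convex hull $\widehat{f(K)}$, $p_0\neq f(p_0)$, and $p_0, f(p_0)$ can be joined by a path $\gamma$ in $X\setminus\widehat{f(K)}$.  Such a $p_0$ exists because $X$ is a connected noncompact complex manifold of dimension at least $2$ and $\widehat{f(K)}$ is compact.  Since $X$ is Stein, the compact set $\widehat{f(K)}\cup\{f(p_0)\}$ is then $\mathcal O(X)$-convex, and likewise $\widehat{f(K)}\cup\{p_0\}$.

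Mimicking the construction in the proof of Theorem \ref{t:Kupka-Smale-2}, I would take a Runge open neighbourhood $U_0$ of $\widehat{f(K)}$ and a small coordinate ball $U_1$ around $f(p_0)$ with $\overline U_0\cap \overline U_1 = \varnothing$, and define a holomorphic injection $\tilde\phi: U_0\sqcup U_1 \to X$ as the identity on $U_0$ and, on $U_1$, as the affine map $z\mapsto p_0 + A(z-f(p_0))$ in local coordinates.  Shrinking $U_1$ so that $\tilde\phi(U_1)\subset X\setminus\overline{U_0}$ and sliding $U_1$ along $\gamma$ inside $X\setminus\widehat{f(K)}$ (while simultaneously interpolating the derivative through a path in $GL(T_{f(p_0)}X, T_{\gamma(t)}X)$) exhibits $\tilde\phi$ as isotopic to the inclusion through holomorphic injections whose sources and images are Runge in $X$.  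The Anders\'en-Lempert theorem for Stein manifolds with the density property, as used in the proof of Theorem \ref{t:Kupka-Smale-2}, then yields a sequence $\phi_n\in\Aut\,X$ converging to $\tilde\phi$ uniformly on $\overline U_0\cup\overline U_1$.

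Setting $g_n = \phi_n\circ f$ gives $d_K(g_n,f)\to 0$, $g_n(p_0)\to p_0$, and $d_{p_0}g_n\to A\cdot d_{p_0}f$.  Choosing $A$ so that $A\cdot d_{p_0}f$ has all eigenvalues inside the unit circle, all outside it, or some of each (and none on it), the implicit function theorem applied to $x\mapsto g_n(x)-x$ near $p_0$ yields, for $n$ large, a unique fixed point $q_n$ of $g_n$ close to $p_0$, with $q_n\in X\setminus K$ and $d_{q_n}g_n$ close to $A\cdot d_{p_0}f$, and hence of the desired attracting, repelling, or saddle type.  The main technical point is the verification of the Runge and isotopy hypotheses required for the Anders\'en-Lempert theorem, but this is routine given the choice of $p_0$ above.
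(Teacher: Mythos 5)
Your proposal is correct in substance but takes a longer route than the paper, essentially re-proving the interpolation fact the paper quotes off the shelf. The paper's proof is three lines: enlarge $K$ to be holomorphically convex, pick $p\notin f(K)\cup K$, invoke \cite[Theorem 2]{Varolin2000} to get $h\in\Aut\,X$ with $d_{f(K)}(h,\id)<\epsilon$, $h(f(p))=p$, and $d_{f(p)}h\circ d_pf$ of the desired spectral type, and set $g=h\circ f$. Varolin's theorem already produces an automorphism close to the identity on a holomorphically convex compact set realizing a finite set of prescribed $1$-jets at finitely many points outside it, so the fixed point $p$ of $g$ is \emph{exact}. You instead build the interpolating automorphism by hand from the Anders\'en--Lempert theorem, which only gives uniform \emph{approximation}; this is why you must then insert an implicit-function-theorem step to locate a genuine fixed point of $g_n$ near $p_0$ and verify its type is preserved. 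That extra step is correct but avoidable. Two small remarks on your version: (i) the requirement $p_0\neq f(p_0)$ is an artifact of your sliding-along-$\gamma$ construction and is not needed in the paper's approach (Varolin's theorem happily fixes a point while prescribing the derivative there); if $f$ has a fixed point everywhere outside $K$ you would need to treat that case separately, whereas the paper does not; (ii) the Runge/isotopy verification you defer is precisely what Varolin's theorem packages up, so quoting it directly (as the paper does, and as you already do for Theorem \ref{t:homoclinic-perturbation} elsewhere) is both shorter and less error-prone.
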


\begin{proof}
After enlarging $K$, we may assume that it is holomorphically convex.  Let $p\notin f(K)\cup K$.  By \cite[Theorem 2]{Varolin2000}, there is $h\in\Aut\,X$ such that $d_{f(K)}(h,\id)<\epsilon$, $h(f(p))=p$, and $d_{f(p)}h\circ d_pf$ is a saddle or attracting or repelling.  Now take $g=h\circ f$.
\end{proof}

\begin{remark}   \label{r:infinitely-many}
It immediately follows from Lemma \ref{l:create-point} that a generic automorphism of a Stein manifold $X$ with the density property has infinitely many attracting fixed points, infinitely many repelling fixed points, and infinitely many saddle fixed points. Indeed, given a compact $K\subset X$, the open set of automorphisms of $X$ with an attracting fixed point outside $K$ is dense in $\Aut\,X$.  The same argument works for repelling and saddle fixed points.  Note that this implies that for a generic automorphism $f$ of $X$, the sets $J^+_f, J^-_f, J_f, F^+_f, F^-_f, \Omega_f$ are not relatively compact.
\end{remark}

\begin{proof}[Proof of Theorem \ref{t:stable-unstable}]
 By Remark \ref{r:infinitely-many}, there is a countable dense subset $\{\tilde \psi_1, \tilde \psi_2, \ldots\}$ in $\Aut\,X$ such that each $\tilde \psi_j$ admits a saddle fixed point $\eta(\tilde \psi_j)$.  After passing to a suitable subsequence of $(\tilde \psi_j)$, which we call $(\psi_j)$, we can construct by induction compact subsets $H_j\subset X$ and numbers $\gamma_j>0$ such that 
\begin{enumerate}
 \item  $\eta(\psi_j)\in H_j^\circ$,
 \item the open set $B_j:=\{h\in \Aut\,X: d_{H_j}(\psi_j,h)<\gamma_j\}$ is disjoint from $B_1,\ldots,B_{j-1}$, 
 \item $A:=\bigcup\limits_{j\geq 1}B_j$ is  dense  in $\Aut\,X$ since it contains  $\{\tilde \psi_1, \tilde \psi_2, \ldots\}$,
 \item every automorphism $h\in B_j$ has a unique saddle fixed point $\eta(h)$ in the neighbourhood  of $\eta(\psi_j)$ provided by Lemma \ref{l:saddlepert}, which we can assume to be contained in $H_j^\circ$.
 \end{enumerate}

Let $\{U_n:n\geq 1\}$ be a countable basis for the topology of $X$ consisting of relatively compact open sets, and define the  sets
\[ S_n:=\{f\in A: W^s_f(\eta(f))\cap U_n\neq \varnothing\}\] 
and 
\[ T_n:=\{f\in A: W^u_f(\eta(f))\cap U_n\neq \varnothing\},\]
which are open by Lemma \ref{l:delicate2}.  Hence,
\[ G:= \bigcap_n A\setminus \partial S_n\,\cap\, \bigcap_n A\setminus \partial T_n \]
is residual in $\Aut\,X$.  We claim that for all $f\in G$, the stable manifold $W^s_f(\eta(f))$ is dense in $X\setminus \rne(f)$ and the unstable manifold $W^u_f(\eta(f))$ is dense in $X\setminus \rne(f^{-1})$.  By Lemma \ref{l:stable-in-julia}, this proves the theorem.

Fix $f\in G$ and $U_n$ intersecting $X\setminus \rne(f)$.  We will show that $f\in S_n$.  By the definition of $G$, this follows if we show that $f\in \overline S_n$.  Let $K\subset X$ be compact and let $\epsilon >0$.  We will show that there is $g\in S_n$ such that $d_K(f,g)<\epsilon$.  An analogous argument shows that $f\in T_n$.

Find $j\geq 1$ such that $f\in B_j$.  Let $H$ be a holomorphically convex compact set containing $K\cup H_j$.  By definition of $\rne(f)$, there is $x\in U_n$ and an automorphism $\tilde f\in B_j$ with $d_H(f,\tilde f)< \epsilon/2$, such that the orbit $({\tilde f}^n(x))$ is not contained in $H$.  Let $n_0\geq 0$ be the smallest nonnegative integer such that ${\tilde f}^{n_0}(x)\in X\setminus H$.  The stable manifold $W^s_{\tilde f} (\eta(\tilde f))$ is not contained in $H$, so we may choose $y\in W^s_{\tilde f} (\eta(\tilde f))\setminus H$ such that $\tilde f^n(y)\in H$ for all $n\geq 1$.

Let $W$ be a Runge\footnote{We take a Runge open set to be Stein by definition.} neighbourhood of $H$ containing neither $\tilde f^{n_0}(x)$ nor $y$.  Let $V$ be a neighbourhood of $\tilde f^{n_0}(x)$ and let $\varphi : [0,1]\times V\to X$ be a $C^1$ isotopy such that for all $t\in [0,1]$,
\begin{enumerate}
\item $\varphi_t : V\to X$ is holomorphic and injective,
\item $\varphi_t(V)$ is disjoint from $W$, 
\item $W\cup \varphi_t(V)$ is Runge,
\item $\varphi_0$ is the inclusion of $V$ into $X$, 
\item $\varphi_1(\tilde f^{n_0}(x))=y$.
\end{enumerate}

Find $m_0\geq 1$ such that $\tilde f^{m_0}(y)$ belongs to the local stable manifold $\Gamma^s_{\tilde f}(\eta(\tilde f),r)$, where $r$ is given by Lemma \ref{l:delicate}.  There are relatively compact neighbourhoods $U\subset U_n$ of $x$ and $Z$ of $\tilde f^{m_0}(y)$ such that ${\tilde f}^{n_0}(U)\Subset V$ and $Z\Subset \tilde f^{m_0}\circ  \varphi_1\circ {\tilde f}^{n_0}(U)$.

By the Anders\'en-Lempert theorem \cite[Theorem 4.10.5]{Forstneric2017}, there is a sequence $(\Phi_j)$ in $\Aut\,X$ such that $\Phi_j\to \id$ on $W$ and $\Phi_j\to \varphi_1$ on $V$, uniformly on compact subsets.  For $j$ large enough, the automorphism $g=\tilde f\circ \Phi_j$ satisfies the following conditions:
\begin{enumerate}
\item $g\in B_j$.
\item $d_{H}(\tilde f,g)<\epsilon/2$.
\item $Z\Subset g^{m_0+n_0}(U)$.
\item  $ W^s_{g}(\eta(g))\cap Z\neq \varnothing$.
\end{enumerate}
Hence $U_n$ intersects the stable manifold $W^s_{g}(\eta(g))$.
\end{proof}

This result has several corollaries.  The first follows immediately from Theorem \ref{t:stable-unstable} and Lemma \ref{l:stable-in-julia}.

\begin{corollary}  \label{c:stable-unstable-Julia}
For a generic automorphism $f$ of a Stein manifold $X$ with the density property,
\[ F_f^+ = \rne(f), \qquad J_f = X \setminus (\rne(f) \cup \rne(f^{-1})), \]
and $f$ has a saddle fixed point $q$ such that
\[ J_f = \overline{W^s_f(q)}\cap \overline{W^u_f(q)}. \]
\end{corollary}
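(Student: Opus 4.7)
The plan is essentially an assembly of Theorem \ref{t:stable-unstable} with Lemma \ref{l:stable-in-julia} and the Montel-theorem observation $\rne(f)\subset F_f^+$ (and, by symmetry, $\rne(f^{-1})\subset F_f^-$) recorded in the notation section of the introduction. The real work has already been done in Theorem \ref{t:stable-unstable}, so there is no serious obstacle; each of the three conclusions will drop out after a short set-theoretic computation.

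First I would fix a generic $f$ in the residual set where Theorem \ref{t:stable-unstable} applies, and let $q$ be the saddle fixed point produced by that theorem. The crucial point is that \emph{the same} $q$ witnesses both equalities $X\setminus\rne(f)=\overline{W_f^s(q)}$ and $X\setminus\rne(f^{-1})=\overline{W_f^u(q)}$. To prove $F_f^+=\rne(f)$, I would combine the free inclusion $\rne(f)\subset F_f^+$ with its converse obtained from Lemma \ref{l:stable-in-julia}: since $W_f^s(q)\subset J_f^+$ and $J_f^+$ is closed, $\overline{W_f^s(q)}\subset J_f^+$, so $X\setminus\rne(f)\subset J_f^+$, which upon complementing reads $F_f^+\subset\rne(f)$. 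The identical argument applied to $f^{-1}$, using $W_f^u(q)\subset J_f^-$ from Lemma \ref{l:stable-in-julia} together with $F_f^-=F_{f^{-1}}^+$, yields $F_f^-=\rne(f^{-1})$.

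The remaining two statements are then automatic. From $J_f=X\setminus(F_f^+\cup F_f^-)$ I get
\[ J_f=X\setminus(\rne(f)\cup\rne(f^{-1})), \]
and intersecting the two identities of Theorem \ref{t:stable-unstable} gives
\[ J_f=(X\setminus\rne(f))\cap(X\setminus\rne(f^{-1}))=\overline{W_f^s(q)}\cap\overline{W_f^u(q)}, \]
with the same saddle fixed point $q$ as before. The only point that deserves a moment's attention is the use of a single $q$ in the last formula; this is not a coincidence we need to arrange, but is exactly what Theorem \ref{t:stable-unstable} delivers.
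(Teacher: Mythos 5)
Your proof is correct and takes exactly the approach the paper intends: the paper's own proof is a one-line pointer to Theorem \ref{t:stable-unstable} and Lemma \ref{l:stable-in-julia}, together with the observation $\rne(f)\subset F_f^+$ from the notation section, and your write-up simply spells out the set-theoretic bookkeeping that the authors left implicit.
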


\begin{remark}
The forward Fatou set $F_f^+$ is defined using the one-point compactification of $X$.  Different compactifications can be used, resulting in a forward Fatou set that is in general strictly smaller than the set  $F_f^+$ defined above, see for example \cite[Example 2.6]{ABFP}.  Surprisingly, for a generic $f$ this ambiguity disappears.  Indeed, Corollary \ref{c:stable-unstable-Julia} shows that for a generic $f$ the definition of the forward Fatou set is independent of the choice of compactification, since every forward Fatou set contains $\rne(f)$.

The same holds for the definitions of $F_f^-$, $J_f^+$, $J_f^-$, and $J_f$.
\end{remark} 

\begin{corollary}\label{c:NW}
For a generic automorphism $f$ of a Stein manifold $X$ with the density property the following holds. Let $k\geq 1$.  If $U$ is a neighbourhood of a point in $J_f^+$, $W$ is a neighbourhood of a point in  $J_f^-$, and  $V$ is a neighbourhood of the fixed saddle point $q$ given by Corollary \ref{c:stable-unstable-Julia}, then there is a point in $U$ whose forward $f^k$-orbit enters $V$ and subsequently enters $W$. In particular  there is $n>0$ such that 
\[ f^{nk}(U)\cap W\neq \varnothing.\]
\end{corollary}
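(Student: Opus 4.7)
The plan is to combine Corollary~\ref{c:stable-unstable-Julia} with the classical inclination ($\lambda$-)lemma applied to $f^k$ at the saddle fixed point $q$. First, Corollary~\ref{c:stable-unstable-Julia} gives $J_f^+=\overline{W_f^s(q)}$ and $J_f^-=\overline{W_f^u(q)}$, so I may choose $x\in U\cap W_f^s(q)$ and $y\in W\cap W_f^u(q)$. Since $q$ is a hyperbolic fixed point of $f^k$ with the same stable and unstable manifolds as for $f$, the backward iterates $f^{-Mk}(y)$ tend to $q$; for $M$ sufficiently large, the point $p_0:=f^{-Mk}(y)$ lies in the local unstable manifold $\Gamma^u_f(q,r)\cap V$, and I fix a small local unstable disk $D^u\subset V\cap\Gamma^u_f(q,r)$ centred at $p_0$.

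Next, I pick a small embedded holomorphic disk $D\subset U$ through $x$, of dimension equal to the unstable dimension of $q$, transverse to the immersed submanifold $W_f^s(q)$ at $x$; this is possible because $W_f^s(q)$ is an immersed complex submanifold of complementary dimension. By the $\lambda$-lemma applied to $f^k$ at $q$, for every $\epsilon>0$ there exists $n$ (arbitrarily large) such that $f^{nk}(D)$ contains a $C^1$-disk that is $\epsilon$-close to $D^u$. Taking $\epsilon$ small enough, such a disk contains a point $p_1$ so close to $p_0$ that $p_1\in V$ (openness of $V$) and $f^{Mk}(p_1)\in W$ (continuity of $f^{Mk}$, openness of $W$, and $f^{Mk}(p_0)=y\in W$). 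Let $x'\in D\subset U$ be the preimage $f^{nk}(x')=p_1$. Then the forward $f^k$-orbit of $x'$ enters $V$ at step $n$ and enters $W$ at step $n+M$, and the final claim follows on setting $n':=n+M$.

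The substantive point requiring care is the $\lambda$-lemma itself. A holomorphic automorphism of a complex manifold is, in particular, a real-smooth diffeomorphism, and $q$ is hyperbolic in the classical sense, so the standard smooth inclination lemma applies directly. The passage from $f$ to $f^k$ preserves hyperbolicity of $q$ and its stable and unstable manifolds, so the lemma may be invoked for $f^k$ without modification. The orbit is forced to pass through $V$ before reaching $W$ by construction, because $p_0$ is chosen already deep inside $V\cap W_f^u(q)$ and the subsequent $Mk$ iterations carry $p_1$ close to $y\in W$; no additional obstacle is anticipated.
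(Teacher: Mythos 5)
Your proof is correct and follows essentially the same route as the paper: you invoke the characterisation $J_f^+=\overline{W_f^s(q)}$, $J_f^-=\overline{W_f^u(q)}$ from Corollary~\ref{c:stable-unstable-Julia} and then apply the $\lambda$-lemma for $f^k$ at the saddle $q$, choosing a transverse disk in $U$ and pulling a point near a local unstable piece deep in $V$ back to $U$. The paper's proof is a slightly more compressed version of this same argument (transverse polydiscs $D_s\subset U$ and $D_u\subset W$ and one invocation of the lambda lemma); you have merely spelled out the intermediate point $p_0=f^{-Mk}(y)$ and the continuity estimates explicitly.
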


\begin{proof}
By Theorem \ref{t:stable-unstable}, $J_f^+ =\overline{W_f^s(q)}=\overline{W_{f^k}^s(q)}$ and $J_f^- =\overline{W_f^u(q)}=\overline{W_{f^k}^u(q)}$.  Let $D_s$ be a  polydisc transverse to $W_{f^k}^s(q)$ contained in $U$, and let $D_u$ be a polydisc transverse to $W_{f^k}^u(q)$ contained in $W$.  By the lambda lemma applied to the map $f^k$, there is a point in  $D_s\subset U$ whose $f^k$-orbit enters $V$ and subsequently lands in  $D_u\subset W$.
\end{proof}

\begin{corollary}   \label{t:forward-Julia}
For a generic automorphism $f$ of a Stein manifold $X$ with the density property, the following hold.
\begin{enumerate}
\item[(a)]  The forward Julia set $J_f^+$ is connected.
\item[(b)]  $J_f^+$ is the boundary of each connected component of the basin of attraction of every attracting cycle of $f$.
\item[(c)]  $J_f^+$ and $J_f$  have empty interior.
\item[(d)]  If $U$ is a neighbourhood of a point in $J_f^+$, then $U\setminus J_f^+$ has infinitely many connected components.  Hence, $J_f^+$ is not an embedded topological manifold at any of its points.
\item[(e)]  Every connected component of $F_f^+$ is Stein.
\end{enumerate}
\end{corollary}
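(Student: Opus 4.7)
The plan is to derive all five statements from Corollary~\ref{c:stable-unstable-Julia} and Remark~\ref{r:infinitely-many}, together with the lambda lemma from hyperbolic dynamics. Let $q$ be the saddle fixed point provided by Corollary~\ref{c:stable-unstable-Julia}, so that $J_f^+=\overline{W_f^s(q)}$ and $J_f^-=\overline{W_f^u(q)}$. Part~(a) then follows at once: by the discussion in Section~\ref{sec:stable}, $W_f^s(q)$ is biholomorphic to $\C^m$ and hence connected, so its closure is too.

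The heart of the matter is (b). Given a connected component $B$ of the basin of attraction of an attracting cycle of $f$, I would first pass to an iterate $f^k$---which leaves the sets $J_f^+$, $J_f^-$, $W_f^s(q)$, $W_f^u(q)$ unchanged---so that $B$ becomes the basin of an attracting fixed point $a$. Since $a$ is repelling for $f^{-1}$, it lies outside $\rne(f^{-1})$, so
\[ a\in J_f^- = \overline{W_f^u(q)}, \]
and the open set $B$ containing $a$ meets $W_f^u(q)$ at some point $y$. Now given $p\in J_f^+$ and a neighbourhood $U$ of $p$, I would pick $x\in W_f^s(q)\cap U$ and a small polydisc $D_s\subset U$ transverse to $W_f^s(q)$ at $x$; by the lambda lemma applied to $f^k$ at $q$, a subsequence of the iterates $f^{kn}(D_s)$ converges in the $C^1$ sense to a neighbourhood of $y$ inside $W_f^u(q)$. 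Hence for some $n$ there is $z\in D_s$ with $f^{kn}(z)\in B$, and complete $f^k$-invariance of $B$ yields $z\in B\cap U$. Thus $J_f^+\subset\overline{B}$, and combined with $\partial B\subset J_f^+$ (from $B\subset F_f^+$) this gives $\partial B = J_f^+$. Part~(c) is then automatic: every point of $J_f^+$ is approximated by points of the open set $B\subset F_f^+$, and likewise for $J_f\subset J_f^+$.

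For (d), Remark~\ref{r:infinitely-many} supplies infinitely many attracting fixed points of $f$, whose basins are pairwise disjoint and connected by Remark~\ref{r:main-remark}(3), hence are distinct connected components of $F_f^+$. Since by (b) each such basin meets $U$, and each connected component of $U\setminus J_f^+ = U\cap F_f^+$ lies in a single component of $F_f^+$, there are infinitely many components of $U\setminus J_f^+$; the complement of any embedded topological manifold in a Euclidean ball has at most two components, so $J_f^+$ cannot be such a manifold at any point. For (e), the components of $F_f^+$ split into basins of attracting cycles and non-recurrent Fatou components. A basin of an attracting fixed point $a$ can be written as the increasing union $\bigcup_n f^{-n}(V)$ for a small Stein neighbourhood $V$ of $a$ with $f(\overline V)\subset V$; each $f^{-n}(V)$ is biholomorphic to $V$, hence Stein, so the basin is Stein by the Behnke-Stein theorem on increasing unions of Stein open sets, and the same argument handles basins of cycles after passing to an iterate. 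The main obstacle I foresee is the non-recurrent case: one needs to argue separately that a non-recurrent Fatou component is Stein, presumably by constructing a plurisubharmonic exhaustion from the orbit dynamics (exploiting that orbits exit every compact subset of the component) or by appealing to known Stein-ness results for wandering Fatou components of automorphisms of Stein manifolds.
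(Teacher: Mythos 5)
Parts (a)--(d) of your argument are essentially the paper's: (a) is identical; (b) re-derives via the lambda lemma exactly what the paper invokes from Corollary~\ref{c:NW} (which is itself proved via the lambda lemma applied to transverse polydiscs, so the content is the same); (c) and (d) match the paper's reasoning.

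Part (e), however, has a genuine gap, which you yourself flag: your Behnke--Stein argument only treats basins of attracting cycles, and you have no argument for non-recurrent Fatou components. The paper sidesteps this split entirely. Its key input is the equality $F_f^+=\rne(f)$ from Corollary~\ref{c:stable-unstable-Julia}, together with the observation that $\rne(f)$ satisfies a Kontinuit\"atssatz/maximum-principle property: if a holomorphic map $\phi$ from a polydisc $P$ into $X$ sends a Hartogs figure $H\subset P$ into $\rne(f)$, then the uniform compactness of forward orbits that defines $\rne(f)$ persists on all of $\phi(P)$ by the maximum principle, so $\phi(P)\subset\rne(f)$. By connectedness $\phi(P)$ then stays in the same component $W$, so $W$ is pseudoconvex, and a pseudoconvex open subset of a Stein manifold is Stein. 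This handles every component of $F_f^+$ at once, with no case distinction and no exhaustion-function construction. To repair your proof you would need either to import this pseudoconvexity argument, or to supply the missing Stein-ness proof for non-recurrent components, which does not seem to follow from orbit dynamics alone without invoking $F_f^+=\rne(f)$ in some form.
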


\begin{proof}
(a)  By Theorem \ref{t:stable-unstable}, $f$ has a saddle fixed point $q$ such that $J_f^+$ is the closure of the stable manifold $W_f^s(q)$, which is connected.

(b)  Let $B$ be the basin of attraction of an attracting cycle of $f$.  (By Remark \ref{r:infinitely-many}, there are infinitely many such basins.)  Let $k$ denote the length of the cycle.  Let $B_0$ be a connected component of $B$.  Then $B_0$ is the basin of attraction with respect to $f^k$ of a point $p$ in the cycle.  We have $p\in J_f^-$.  Let $U$ be a neighbourhood of a point in $J_f^+$.  By Corollary \ref{c:NW}, $f^{jk}(U)$ intersects $B_0$ for some $j\geq 0$, so $U$ intersects $B_0$.  This shows that $J_f^+\subset \partial B_0$.  The opposite inclusion is evident.

(c)  Being the boundary of an open set by (b), $J_f^+$ has empty interior.  It follows that $J_f$ has  empty interior too.

(d)  By Remark \ref{r:infinitely-many}, $f$ has infinitely many attracting fixed points.  Each basin of attraction of $f$ is a connected component of $F_f^+ = X\setminus J_f^+$.

(e)  Let $W$ be a connected component of $F_f^+=\rne(f)$.  The characterisation of pseudoconvex domains in $\C^n$ in terms of Hartogs figures extends to Stein manifolds.  Let $H$ be a Hartogs figure in a polydisc $P$ of dimension $\dim X$ and let $\phi:P\to X$ be a holomorphic map with $\phi(H)\subset W$.  The maximum principle shows that $\phi(P)\subset \rne(f)$, so by connectedness, $\phi(P)\subset W$.
\end{proof}

\begin{theorem}  \label{t:non-wand-equals-Julia}
For a generic automorphism $f$ of a Stein manifold $X$ with the density property,
\[ \Omega_f = J_f \cup \att(f) \cup \rep(f). \]
\end{theorem}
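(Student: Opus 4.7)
The plan is to establish the two inclusions separately, leveraging the machinery developed above in this section.

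For the containment $J_f \cup \att(f) \cup \rep(f) \subset \Omega_f$, attracting and repelling periodic points are automatically non-wandering. To see that $J_f \subset \Omega_f$, fix $p \in J_f = J_f^+ \cap J_f^-$ and an arbitrary neighbourhood $U$ of $p$. Since $U$ is simultaneously a neighbourhood of a point of $J_f^+$ and a neighbourhood of a point of $J_f^-$, Corollary \ref{c:NW} applied with $W = U$ and $k = 1$ produces $n \geq 1$ with $f^n(U) \cap U \neq \varnothing$, so $p \in \Omega_f$.

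For the reverse containment $\Omega_f \subset J_f \cup \att(f) \cup \rep(f)$, I will argue by contradiction: suppose $p \in \Omega_f$ does not lie in $J_f$. Then $p$ belongs to $F_f^+ \cup F_f^-$, and I treat the two cases symmetrically using that $\Omega_{f^{-1}} = \Omega_f$. If $p \in F_f^+$, then Corollary \ref{c:stable-unstable-Julia} gives $p \in \rne(f)$; invoking \cite[Theorem 2(a)]{AL2020}, sharpened as in the footnote of Remark \ref{r:main-remark}(3), a non-wandering point of a generic $f$ lying in $\rne(f)$ is actually an attracting periodic point, so $p \in \att(f)$. The case $p \in F_f^-$ follows by applying the same argument to $f^{-1}$, yielding $p \in \rep(f)$. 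Since each of the ingredients holds on a residual set of $f$, their intersection still does, and the desired equality holds generically.

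The substantive work has been carried out elsewhere: Corollary \ref{c:NW} (resting on the lambda lemma and Theorem \ref{t:stable-unstable}) supplies the transitivity needed for the easy inclusion, while Corollary \ref{c:stable-unstable-Julia} together with the prior result in \cite{AL2020} identifies non-wandering points in the forward Fatou set with attracting periodic points. The only subtlety is to invoke the strengthening mentioned in Remark \ref{r:main-remark}(3), not merely periodicity, so that one actually lands in $\att(f) \cup \rep(f)$ rather than just in the set of periodic points of $f$.
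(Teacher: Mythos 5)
Your proof is correct and follows essentially the same route as the paper: the forward inclusion via Corollary \ref{c:NW}, and the reverse inclusion via Corollary \ref{c:stable-unstable-Julia} together with the strengthened form of \cite[Theorem 2(a)]{AL2020}. You make explicit the footnote caveat that \cite[Theorem 2(a)]{AL2020} as literally stated only gives periodicity and requires the robust non-expelling argument to yield an attracting point; the paper's proof cites the same result implicitly relying on that sharpening.
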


\begin{proof}
Corollary \ref{c:NW} yields $J_f\subset \Omega_f$.  Conversely, let $x$ be a non-wandering point of an automorphism $f$ of $X$ outside $J_f$.  For generic $f$, by Corollary \ref{c:stable-unstable-Julia}, $x$ lies in $\rne(f)$ or $\rne(f^{-1})$.  If $x\in\rne(f)$, then $x\in\att(f)$, and if $x\in\rne(f^{-1})$, then $x\in\att(f^{-1})=\rep(f)$ by \cite[Step 1 of the proof of Theorem 2]{AL2020}.
\end{proof}

\begin{corollary}  \label{c:empty-interior}
For a generic automorphism $f$ of a Stein manifold $X$ with the density property, the  non-wandering set $\Omega_f$ has empty interior.
\end{corollary}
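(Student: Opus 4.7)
The plan is to read off the conclusion directly from the decomposition
\[ \Omega_f = J_f \cup \att(f) \cup \rep(f) \]
established in Theorem \ref{t:non-wand-equals-Julia}, combined with the fact from Corollary \ref{t:forward-Julia}(c) that $J_f$ has empty interior. The only thing left to check is that the extra points contributed by $\att(f)$ and $\rep(f)$ do not create any interior for $\Omega_f$.

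To do this, I would observe that each attracting periodic point $p \in \att(f)$ of period $k$ is an \emph{isolated} point of $\Omega_f$: in any sufficiently small neighborhood $N$ of $p$ contained in the component of the basin of $p$ and not meeting the other points of the cycle of $p$, every point $x \neq p$ satisfies $f^{nk}(x) \to p$, and is therefore wandering for $f$ (a small neighborhood of $x$ eventually leaves $N$ forever under $f^k$). Hence $N \cap \Omega_f = \{p\}$, so $p$ cannot be an interior point of $\Omega_f$. The same argument applied to $f^{-1}$ shows that every $p \in \rep(f) = \att(f^{-1})$ is also an isolated point of $\Omega_{f^{-1}} = \Omega_f$.

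Consequently, the interior of $\Omega_f$ is disjoint from $\att(f) \cup \rep(f)$, and by the decomposition above,
\[ \operatorname{int}(\Omega_f) \subset J_f. \]
Since $J_f$ has empty interior, it follows that $\operatorname{int}(\Omega_f) = \varnothing$. There is no real obstacle here; the corollary is essentially packaging Theorem \ref{t:non-wand-equals-Julia} together with Corollary \ref{t:forward-Julia}(c), the only mildly non-trivial input being the standard observation that attracting and repelling periodic points sit isolated inside the non-wandering set.
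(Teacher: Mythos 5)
Your proof is correct and takes essentially the same route as the paper: use the decomposition $\Omega_f = J_f \cup \att(f) \cup \rep(f)$ from Theorem \ref{t:non-wand-equals-Julia}, the fact that $J_f$ has empty interior from Corollary \ref{t:forward-Julia}(c), and then note that the attracting and repelling cycles contribute no interior. The paper simply observes that $\att(f)\cup\rep(f)$ is discrete, whereas you spell out that each such point is actually isolated in $\Omega_f$; these are minor variants of the same final step.
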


\begin{proof}
By Theorem \ref{t:non-wand-equals-Julia}, $\Omega_f = J_f \cup \att(f) \cup \rep(f)$.  By Corollary \ref{t:forward-Julia}, $J_f$ has empty interior.  Finally, $\att(f)\cup \rep(f)$ is discrete.
\end{proof}

\section{Chain-recurrent set} 
\label{sec:Conley}

\noindent
Conley's general theory of chain-recurrence for an endomorphism of a topological space satisfying certain mild hypotheses  was introduced in \cite{Conley1978}.  We consider the  stronger notion of chain-recurrence introduced by Hurley in \cite{Hurley1992}, which is better suited to non-compact spaces since it does not depend on the choice of a metric (see also \cite[Section 4]{AL2022}).  Let $X$ be a locally compact second countable metric space and let $f:X\to X$ be continuous.  Choose a metric $d$ on $X$ compatible with the topology of $X$.  Let $\epsilon: X\to(0,\infty)$ be continuous.  A finite sequence $x_0, x_1,\ldots, x_n$, $n\geq 1$, of points in $X$ is an $\epsilon$-chain or $\epsilon$-pseudo-orbit of length $n$ if $d(f(x_j), x_{j+1})<\epsilon (f(x_j))$ for $j=0,\ldots,n-1$.  A point $p$ in $X$ is chain-recurrent for $f$ if for every function $\epsilon$, there is an $\epsilon$-chain that begins and ends at $p$.  We denote by $C_f$ the set of chain-recurrent points of $f$.  
An equivalence relation is defined on $C_f$ by declaring points $p$ and $q$ equivalent if for every continuous $\epsilon:X\to (0,\infty)$, there is an $\epsilon$-chain from $p$ to $q$ and an $\epsilon$-chain from $q$ to $p$.  The equivalence classes are called chain-recurrence classes.  Note that $\Omega_f\subset C_f$.  Also, if $f$ is a homeomorphism, then $C_f=C_{f^{-1}}$.

In \cite[Section 3]{AL2022}, we introduced the notion of a pre-recurrent Fatou component of an endomorphism.  For automorphisms, it coincides with the notion of a recurrent Fatou component.  By \cite[Section 4]{AL2022}, the chain-recurrent set of a generic endomorphism $f$ of an Oka-Stein manifold consists of the chain-recurrence class $J_f^+\cup\npr(f)$ along with the attracting cycles of $f$.  Here, $\npr(f)$ is the union of the non-pre-recurrent Fatou components of $f$.  It is an open question whether $\npr(f)$ is empty for generic $f$.  Our next theorem is the corresponding result for automorphisms.  
The {\it proper basin} of an attracting cycle is the basin of attraction of the cycle with the cycle itself removed.
\begin{theorem}   \label{t:Conley}
For a generic automorphism $f$ of a Stein manifold $X$ with the density property, the following hold.
\begin{enumerate}
\item[(a)]  A point in $X$ is chain-recurrent if and only if it does not lie in the proper basin of an attracting or repelling cycle of $f$.
\item[(b)]  The chain-recurrence classes are the following.
\begin{itemize}
\item  The complement of the union of the basins of the attracting and repelling cycles of $f$.  This class may also be described as
\[ \big(J_f^+ \cup \nrc(f)\big) \cap \big(J_f^- \cup \nrc(f^{-1})\big). \]
\item  Each attracting cycle and each repelling cycle is a chain-recurrence class.
\end{itemize}
\end{enumerate}
\end{theorem}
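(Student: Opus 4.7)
The plan is to derive parts (a) and (b) by combining the standard Conley--Hurley fact that proper basin points of attractors fail to be chain-recurrent with a chain-transitivity argument that routes every other candidate through the saddle fixed point $q$ of Corollary \ref{c:stable-unstable-Julia}. For an attracting cycle $A$ with basin $B$, pick a forward-trapping neighbourhood $N\supset A$ with $\overline{f(N)}\subset N$ and a continuous $\epsilon:X\to(0,\infty)$ so small on $N$ that every $\epsilon$-chain entering $N$ remains in $N$; a genuine orbit from $x\in B\setminus A$ enters $N$ while $x\notin\overline N$, so no $\epsilon$-chain can return to $x$. The same bound shows that $A$ is isolated as a chain-recurrence class, since $\epsilon$-chains starting in $A$ cannot leave $N$, and basin points in $N\setminus A$ lie outside $C_f$. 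Applying the argument to $f^{-1}$ (using $C_f=C_{f^{-1}}$) disposes of repelling cycles. This reproduces the reasoning of \cite[Section 4]{AL2022}.

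The positive content is the following two-sided transitivity claim, from which everything else follows by concatenation through $q$. With $q$ as in Corollary \ref{c:stable-unstable-Julia}, for every continuous $\epsilon:X\to(0,\infty)$ and every neighbourhood $V$ of $q$:
\begin{itemize}
\item[(i)] every $x\in J_f^+\cup\nrc(f)$ admits an $\epsilon$-chain from $x$ into $V$;
\item[(ii)] every $x\in J_f^-\cup\nrc(f^{-1})$ admits an $\epsilon$-chain from $V$ to $x$.
\end{itemize}
For (i) with $x\in J_f^+=\overline{W^s_f(q)}$, density of $W^s_f(q)$ yields $y\in W^s_f(q)$ with $d(f(x),y)<\epsilon(f(x))$, and the genuine orbit $y,f(y),\ldots,f^N(y)\in V$ completes the chain. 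For $x\in\nrc(f)$ the forward orbit is relatively compact, since $F_f^+=\rne(f)$ by Corollary \ref{c:stable-unstable-Julia}; Theorem \ref{t:non-wand-equals-Julia} forces any $\omega$-limit point in $F_f^+$ to be an attracting periodic point, which contradicts $x\notin\bas(f)$, so $\omega_f(x)\subset J_f^+$, some iterate $f^N(x)$ lies within any prescribed distance of $J_f^+$, and a single small jump reduces to the previous case. Statement (ii) is dual: for $x\in J_f^-=\overline{W^u_f(q)}$, density gives $y\in W^u_f(q)$ with $d(f(y),x)<\epsilon(x)$, and $f^{-n}(y)\in V$ for large $n$ provides the starting point of a genuine orbit ending with a small jump onto $x$; the $\nrc(f^{-1})$ case routes through $\alpha_f(x)\subset J_f^-$ by the same mechanism.

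Since $q$ is fixed, any two chains ending in a sufficiently small $V$ and starting in $V$ concatenate via $q$, the jumps $d(f(v),q)$ and $d(q,v')$ being small by continuity of $f$ at $q$. Combining (i) and (ii) produces, for every pair $x,y\in(J_f^+\cup\nrc(f))\cap(J_f^-\cup\nrc(f^{-1}))$ and every $\epsilon$, $\epsilon$-chains $x\to y$ and $y\to x$, so this whole set forms a single chain-recurrence class. Using $F_f^+=\bas(f)\sqcup\nrc(f)$ and its $f^{-1}$-dual, this set equals $X\setminus(\bas(f)\cup\bas(f^{-1}))$, which together with the first paragraph yields (a) and the class description in (b). The main obstacle is justifying $\omega_f(x)\subset J_f^+$ for $x\in\nrc(f)$ in step (i): this relies crucially on both the generic identifications $F_f^+=\rne(f)$ and $\Omega_f\cap F_f^+=\att(f)$ provided by Corollary \ref{c:stable-unstable-Julia} and Theorem \ref{t:non-wand-equals-Julia}, without which the construction would not proceed.
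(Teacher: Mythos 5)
Your proof is correct, and the overall route is the same as the paper's: isolate the attracting and repelling cycles exactly as in \cite[Section 4]{AL2022}, then show that everything outside the basins lies in one chain-recurrence class by routing $\epsilon$-chains through the saddle fixed point $q$ of Corollary \ref{c:stable-unstable-Julia}, using $J_f^{\pm}=\overline{W_f^{s/u}(q)}$ from Theorem \ref{t:stable-unstable} and the identification $\Omega_f\cap F_f^+=\att(f)$ from Theorem \ref{t:non-wand-equals-Julia} to handle points in $\nrc(f)$ and $\nrc(f^{-1})$.

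The one genuine organizational difference is worth noting. The paper splits the target set into four cases ($J_f$, $\nrc(f)\cap J_f^-$, $\nrc(f^{-1})\cap J_f^+$, $\nrc(f)\cap\nrc(f^{-1})$) and in each case invokes Corollary \ref{c:NW}, which in turn rests on the lambda lemma, to produce a single genuine orbit running from near $J_f^+$ through a neighbourhood of $q$ to near $J_f^-$. You instead prove two one-way claims (every point of $J_f^+\cup\nrc(f)$ reaches $V$, every point of $J_f^-\cup\nrc(f^{-1})$ is reached from $V$) and concatenate them at $q$. Because $\epsilon$-chains permit a jump at $q$, you can use density of $W_f^s(q)$ and $W_f^u(q)$ directly and avoid the lambda lemma entirely. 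This is a modest simplification: the lambda lemma is genuinely needed elsewhere in the paper (e.g.\ for $\Omega_f\supset J_f$), but for chain-recurrence it is overkill. You also explicitly justify $\omega_f(x)\subset J_f^+$ for $x\in\nrc(f)$, a step the paper leaves implicit. One small notational slip: in claim (ii) you want $d(f(y),x)<\epsilon(f(y))$ rather than $\epsilon(x)$, though the argument you give (choosing $y=f^{-1}(z)$ with $z\in W_f^u(q)$ close to $x$, so that $\epsilon(z)\approx\epsilon(x)$) already supplies the fix.
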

\begin{proof}
It is clear that a point in the proper basin of an attracting or repelling cycle is not chain-recurrent.
One cannot escape from an attracting cycle along an $\epsilon$-pseudo-orbit for $f$ if $\epsilon$ is small enough.  Likewise, one cannot reach a repelling cycle along an $\epsilon$-pseudo-orbit for $f$ if $\epsilon$ is small enough.  Thus, each attracting cycle and each repelling cycle is a chain-recurrence class of its own.

By Remark \ref{r:main-remark}(3),
\[ X=J_f^+\sqcup \nrc(f)\sqcup \bas(f)=J_f^-\sqcup  \nrc(f^{-1})\sqcup \bas(f^{-1}),\]
so
\[ X\setminus (\bas(f)\cup \bas(f^{-1}))= \big(J_f^+ \cup \nrc(f)\big) \cap \big(J_f^- \cup \nrc(f^{-1})\big). \]

It follows immediately from Corollary \ref{c:NW} that $J_f$ lies in a single chain-recurrence class.  

Now take $p\in \nrc(f) \cap J_f^-$.  The forward orbit of $p$ is relatively compact because $\nrc(f)\subset F_f^+=\rne(f)$, so $p$ has an $\omega$-limit point $s\in J_f^+$.  By Corollary \ref{c:NW}, arbitrarily close to $s$ is a point whose forward orbit comes arbitrarily close to a saddle fixed point and subsequently comes arbitrarily close to $p$.  This shows that $p$ is chain-recurrent and lies in the same chain-recurrence class as $J_f$.  The case of $p\in \nrc(f^{-1}) \cap J_f^+$ is analogous.

Finally, suppose that $p\in \nrc(f)\cap \nrc(f^{-1})$.  Then $p$ has an $\alpha$-limit point $r\in J_f^-$ and an $\omega$-limit point $s\in J_f^+$.  By Corollary \ref{c:NW}, arbitrarily close to $s$ is a point whose forward orbit comes arbitrarily close to a saddle fixed point and subsequently comes arbitrarily close to $r$.  Thus, $p$ is chain-recurrent and lies in the same chain-recurrence class as $J_f$.
\end{proof}

\begin{remark}
The first fundamental theorem of Conley, as adapted to the non-compact case by Hurley, states that the chain-recurrent points are precisely those that lie in no proper  basin in the sense of Conley.  This abstract notion of a basin (which we refer to as a Conley basin for clarity) was introduced in  \cite{Conley1978} and \cite{Hurley1992} (see also \cite[Section 4]{AL2022}); by a proper Conley basin we mean a Conley basin with its attractor removed.  Theorem \ref{t:Conley} shows that for a generic automorphism $f$ of a Stein manifold with the density property, the proper Conley basins  have the same union as the proper basins of the attracting and repelling cycles of $f$.
\end{remark}

The following result is now nearly evident.
\begin{corollary}  \label{c:equality-of-non-wandering-and-chain-recurrent}
For a generic automorphism $f$ of a Stein manifold with the density property, the following are equivalent.
\begin{enumerate}
\item[(i)]  $C_f=\Omega_f$.
\item[(ii)]  $J_f$ is a chain-recurrence class.
\item[(iii)]  $F_f^+\cup F_f^- = \bas(f)\cup \bas(f^{-1})$.
\item[(iv)]  $\rne(f)\cup\rne(f^{-1})= \bas(f)\cup \bas(f^{-1})$.
\item[(v)]  $\nrc(f)\subset\bas(f^{-1})$ and $\nrc(f^{-1}) \subset \bas(f)$.
\item[(vi)]  $\nrc(f^{-1})\cup (J_f^-\setminus J_f^+) \subset \bas(f)$.  
\end{enumerate}
\end{corollary}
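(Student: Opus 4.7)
The plan is to reduce every equivalence to set-theoretic bookkeeping with the two partitions
\[ X = \bas(f) \sqcup \nrc(f) \sqcup J_f^+ = \bas(f^{-1}) \sqcup \nrc(f^{-1}) \sqcup J_f^- \]
supplied by Remark \ref{r:main-remark}(3), combined with $\Omega_f = J_f\sqcup\att(f)\sqcup\rep(f)$ from Theorem \ref{t:non-wand-equals-Julia}, the description of $C_f$ from Theorem \ref{t:Conley}(b), and the identities $F_f^\pm=\rne(f^{\pm 1})$ from Corollary \ref{c:stable-unstable-Julia}. The key observation to exploit is that the big chain-recurrence class
\[ B := \big(J_f^+\cup\nrc(f)\big)\cap\big(J_f^-\cup\nrc(f^{-1})\big) \]
distributes as $J_f$ together with three \emph{excess} pieces
\[ \nrc(f)\cap J_f^-,\quad J_f^+\cap\nrc(f^{-1}),\quad \nrc(f)\cap\nrc(f^{-1}), \]
so that $B=J_f$ is precisely the simultaneous vanishing of these three intersections.

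First I would establish (i)$\Leftrightarrow$(ii). Since $C_f = B\sqcup\att(f)\sqcup\rep(f)$ and $\Omega_f = J_f\sqcup\att(f)\sqcup\rep(f)$, and $J_f\subset B$, the equality $C_f=\Omega_f$ is the same as $B=J_f$, i.e.\ $J_f$ being a chain-recurrence class. Next, (ii)$\Leftrightarrow$(v) will come directly from the distributed form of $B$: using the partitions, the emptiness of the three excess pieces is equivalent to $\nrc(f)\subset\bas(f^{-1})$ and $\nrc(f^{-1})\subset\bas(f)$.

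The remaining equivalences I would handle as brief unwindings. (iii)$\Leftrightarrow$(iv) is immediate from $F_f^\pm=\rne(f^{\pm 1})$. For (iii)$\Leftrightarrow$(v), expand $F_f^\pm=\bas(f^{\pm 1})\sqcup\nrc(f^{\pm 1})$; then $F_f^+\cup F_f^-=\bas(f)\cup\bas(f^{-1})$ iff the two $\nrc$ pieces lie in $\bas(f)\cup\bas(f^{-1})$, and since $\nrc(g)\cap\bas(g)=\varnothing$ by definition, this collapses to (v). For (v)$\Leftrightarrow$(vi), note that $J_f^-\setminus J_f^+\subset F_f^+=\bas(f)\sqcup\nrc(f)$, so $J_f^-\setminus J_f^+\subset\bas(f)$ is just $\nrc(f)\cap J_f^-=\varnothing$; combined with $\nrc(f^{-1})\subset\bas(f)$, this gives exactly the same trio of empty intersections that encodes (v).

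There is no substantial obstacle here: all the dynamical content has been packaged into the cited theorems, and what remains is a careful distribution of intersections over the two complementary partitions of $X$. The only thing to track with care is keeping the six conditions aligned with the same three empty intersections.
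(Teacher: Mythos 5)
Your proof is correct and takes essentially the same route as the paper, which simply calls the corollary ``nearly evident'' and only spells out (v)$\Leftrightarrow$(vi) by a short substitution argument. Your device of writing the big chain-recurrence class as the disjoint union $B = J_f \sqcup (\nrc(f)\cap J_f^-) \sqcup (J_f^+\cap\nrc(f^{-1})) \sqcup (\nrc(f)\cap\nrc(f^{-1}))$ and recognising each of (i)--(vi) as the simultaneous vanishing of those three excess pieces is a clean way to organise the bookkeeping, and it fills in the details the paper leaves to the reader.
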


\begin{proof}
To see that (v) and (vi) are equivalent, note that given $\nrc(f^{-1}) \subset \bas(f)$, $\nrc(f)\subset \bas(f^{-1})$ is equivalent to $J_f^- \subset J_f^+\cup \bas(f)$, that is, $J_f^-\setminus J_f^+\subset \bas(f)$.
\end{proof}

\section{Chaotic Julia set}
\label{sec:chaotic-Julia}

\noindent
We define the {\it chaotic Julia set} $J_f^*$ of an automorphism $f$ of a Stein manifold $X$ to be the closure of the set $\sad(f)$ of saddle periodic points of $f$ and the {\it non-chaotic Julia set} $M_f=J_f\setminus J_f^*$ to be its complement in the Julia set $J_f$.  Evidently, $J_f^*$ is completely invariant.  Also, $J_{f^m}^*=J_f^*$ for all integers $m\neq 0$ because the saddle periodic points of $f$ and $f^m$ are the same.  In Theorem \ref{t:chaotic-and-non-chaotic-Julia-sets} below we describe the main properties of the chaotic and non-chaotic Julia sets.  First we need to establish some properties of the set $\tam(f)$ of {\it tame points} of $f$.  This notion emerged in \cite{AL2020} from our efforts to understand the proof of the closing lemma \cite[Theorem~5.1]{FS1997} for automorphisms of $\C^n$.  For a compact $K\subset X$, we define the following closed subsets of $X\times\Aut\, X$:
\[ T_K^+ = \{(x,g)\in X\times\Aut\, X: g^j(x)\in K \textrm{ for all } j\geq 0\}, \]
\[ T_K^- = \{(x,g)\in X\times\Aut\, X: g^j(x)\in K \textrm{ for all } j\leq 0\}. \]
We say that $p\in X$ is tame for $f\in\Aut\, X$ if whenever $(p,f)\in (T_K^+\cup T_K^-)^\circ$ for a compact $K \subset X$, we have $(p,f)\in \overset\circ {T_L^+} \cup \overset\circ {T_L^-}$ for some, possibly larger, compact $L \subset X$.  In particular, $p$ is tame for $f$ if $(p,f)\not\in (T_K^+\cup T_K^-)^\circ$ for all compact $K \subset X$. For example, saddle periodic points are tame.  Note that $p$ is tame for $f$ if and only if $p$ is tame for $f^{-1}$.  We know that tame pairs are generic in $X\times\Aut\,X$ \cite[Proposition 1(a)]{AL2020}, but we do not know whether or not non-tame points actually exist.
 
\begin{remark}   \label{r:def-tame}
We note that $x\in\tam(f)\setminus(\rne(f)\cup\rne(f^{-1}))$ if and only if  there are sequences $x_n\to x$ in $X$, $f_n\to f$ in $\Aut\,X$, and integers $j_n, k_n\geq 0$, such that $f_n^{j_n}(x_n)\to \infty_X$ and $f_n^{-k_n}(x_n) \to \infty_X$ as $n\to\infty$.  Here $\infty_X$ denotes the point at infinity in the one-point compactification of $X$.
\end{remark}

\begin{lemma}  \label{l:creating-cycles}
Let $f$ be an automorphism of a Stein manifold $X$ with the density property.  Let $K\subset X$ be compact, let $p\in\tam(f)\setminus(\rne(f)\cup\rne(f^{-1}))$, and let $W$ be a neighbourhood of $f$ in $\Aut(X)$.  Then:
\begin{enumerate}
\item[(a)]  There is $h\in W$ such that $p\in\att(h)$ and the $h$-orbit of $p$ leaves $K$.
\item[(b)]  There is $h\in W$ such that $p\in\rep(h)$ and the $h$-orbit of $p$ leaves $K$.
\item[(c)]  There is $h\in W$ such that $p\in\sad(h)$ and the $h$-orbit of $p$ leaves $K$.
\end{enumerate}
\end{lemma}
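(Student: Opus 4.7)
The three statements are proved together; only the final adjustment of the derivative distinguishes (a), (b), and (c). The plan is to use tameness to produce a perturbation of $f$ admitting an orbit segment that passes near $p$ and whose endpoints lie far outside $K$, to splice this segment into an honest cycle through $p$ by Varolin-type interpolation, and finally to prescribe the derivative at one cycle point so that $p$ becomes attracting, repelling, or saddle.

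By Remark \ref{r:def-tame}, there are $g \in W$ arbitrarily close to $f$, a point $x$ arbitrarily close to $p$, and integers $j, k \geq 1$ such that the orbit segment $a_m := g^{m-k}(x)$, $m = 0, 1, \ldots, N$ (with $N := j+k$), satisfies $a_0, a_N \notin L$ for a prescribed compact $L \supset K \cup \{p, g(p)\}$, while $a_k = x$ is close to $p$. Applying the multi-point jet-interpolation version of Varolin's theorem \cite[Theorem 2]{Varolin2000}, valid in any Stein manifold with the density property, pick $\phi \in \Aut\,X$ arbitrarily close to the identity on $L$ with $\phi(a_m) = a_m$ for $m \neq k$ and $\phi(x) = p$. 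Conjugation gives $\tilde g := \phi \circ g \circ \phi^{-1} \in W$, whose orbit through $a_0$ is exactly $a_0, a_1, \ldots, a_{k-1}, p, a_{k+1}, \ldots, a_N$. Indeed, $\tilde g(a_{k-1}) = \phi(g(a_{k-1})) = \phi(a_k) = p$, $\tilde g(p) = \phi(g(\phi^{-1}(p))) = \phi(g(x)) = \phi(a_{k+1}) = a_{k+1}$, and $\tilde g$ agrees with $g$ at every other $a_m$.

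A second application of the same theorem produces $\psi \in \Aut\,X$ close to the identity, fixing $\{a_0, \ldots, a_{N-1}\} \cup \{p\}$, with $\psi(a_N) = a_0$. Then $h_1 := \psi \circ \tilde g \in W$ admits $p$ as a periodic point of period dividing $N$, the cycle being $\{a_0, a_1, \ldots, a_{k-1}, p, a_{k+1}, \ldots, a_{N-1}\}$; since $a_0 \notin K$, the orbit of $p$ leaves $K$. A third application of Varolin's theorem produces $\chi \in \Aut\,X$ close to the identity, fixing all cycle points, with identity derivative at every cycle point except $a_0$, where $d_{a_0}\chi$ equals a chosen $S \in GL(T_{a_0} X)$. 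Setting $h := \chi \circ h_1 \in W$, the cycle is unchanged, while $d_p(h^N)$ differs from $d_p(h_1^N)$ by the insertion of $S$ at a single factor of the cyclic product of derivatives along the orbit. Since this factor is sandwiched between two fixed invertible linear maps, $d_p(h^N)$ runs over all of $GL(T_p X)$ as $S$ varies; choosing $S$ so that the spectrum of $d_p(h^N)$ lies inside, outside, or straddles the unit circle proves (a), (b), or (c).

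The main technical issue, which I expect to be the chief obstacle, is the bookkeeping: each of the three interpolations must be carried out by an automorphism sufficiently close to the identity on $L$ that the composition $h = \chi \circ \psi \circ \phi \circ g \circ \phi^{-1}$ remains in $W$, and the interpolation conditions at the $N+1$ points $\{a_0, \ldots, a_N, p\}$ must be consistent (generically they are, since $p$ is not on the original orbit of $x$). Both issues are handled by the quantitative multi-point form of Varolin's theorem, exactly as in the proof of Lemma \ref{l:create-point}.
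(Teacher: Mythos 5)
Your approach is essentially the same as the paper's: extract a nearby escaping orbit segment from tameness via Remark~\ref{r:def-tame}, splice it into a cycle through $p$ using \cite[Theorem 2]{Varolin2000}, and set the cycle derivative at a point outside $L$ to make $p$ attracting, repelling, or saddle. The difference is the order of operations. The paper closes up the orbit of the nearby point $q$ and prescribes the return derivative in a \emph{single} Varolin application (putting the arbitrary derivative on $\phi$ at $g^n(q)\notin L$), and only then conjugates $\phi\circ g$ by an automorphism close to the identity that swaps $p$ and $q$. This is slightly more economical than your three separate interpolations, and, more to the point, it sidesteps the degeneracy you flag at the end: your first interpolation requires $\phi(a_k)=p$ and $\phi(a_m)=a_m$ for all $m\neq k$ simultaneously, which is incompatible with injectivity of $\phi$ if $p$ happens to equal some $a_m$ with $m\neq k$, i.e.\ if $p$ lies on the $g$-orbit of $x$. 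Pushing the conjugation to the end, as the paper does, removes this issue entirely: a transposition swapping $p$ and $q$ exists and works whether or not $p$ already lies on the cycle of $q$ for $\phi\circ g$. With that reordering (or a short argument that the degenerate case can be avoided or handled directly), your proof is correct.
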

The analogue for endomorphisms of an Oka-Stein manifold of this result is \cite[Theorem 3]{AL2022}.

\begin{proof}
We may assume that every automorphism close enough to $f$ on $K$ lies in $W$.  Choose a holomorphically convex compact set $L$ such that $p\in L^\circ$ and $f(K)\subset L^\circ$, so that $g(K)\subset L$ for every automorphism $g$ close enough to $f$ on $K$.

By Remark \ref{r:def-tame}, there are $g\in\Aut\, X$ and $q\in X$ arbitrarily close to $f$ and $p$ respectively, such that the forward $g$-orbit of $q$ is not contained in $L$, say $g^n(q)\notin L$, with $n\geq 1$ as small as possible, and the backward $g$-orbit of $q$ is not contained in $L$, say $g^{-m}(q)\notin L$, with $m\geq 1$ as small as possible.

By \cite[Theorem 2]{Varolin2000}, there is $\phi\in\Aut\, X$ as close to the identity as we wish on $L$, such that $\phi$ fixes $g^{-m}(q),\ldots, g^{n-1}(q)$, and $\phi(g^n(q))=g^{-m}(q)$.  Furthermore, we may take $\phi$ to have any invertible derivative at $g^n(q)$.  Then $\phi\circ g$ has $q$ as a periodic point and $\phi\circ g$ is as close to $g$ as we wish on $g^{-1}(L)\supset K$.  Since $X$ has the density property, it has automorphisms arbitrarily close to the identity that interchange $p$ and any sufficiently nearby point.  We conjugate $\phi\circ g$ by such an automorphism to obtain the desired automorphism $h$.
\end{proof}

Using Lemma \ref{l:creating-cycles}, we prove the following analogue of \cite[Theorem 2]{AL2022}.

\begin{lemma}  \label{t:closures-of-periodic}
For a generic automorphism $f$ of a Stein manifold $X$ with the density property, 
\[ \tam(f)\setminus(\rne(f)\cup\rne(f^{-1})) \subset \overline{\att(f)}\cap\overline{\rep(f)}\cap\overline{\sad(f)}. \]
\end{lemma}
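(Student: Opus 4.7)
The approach is a standard Baire category argument built on top of Lemma \ref{l:creating-cycles}. First I would fix a countable basis $\{U_n\}_{n\geq 1}$ for the topology of $X$ consisting of relatively compact open sets and, for each $n$, introduce the open subsets
\[ A_n^\att := \{g \in \Aut\,X : U_n \cap \att(g) \neq \varnothing\}, \]
together with the analogously defined $A_n^\rep$ and $A_n^\sad$; openness follows because a hyperbolic periodic point of $g$ persists and varies continuously under small perturbations of $g$, retaining its attracting, saddle, or repelling character. Since each of these sets is open, its boundary is nowhere dense, so
\[ \mathcal{R} := \bigcap_{n \geq 1} \big( (\Aut\,X \setminus \partial A_n^\att) \cap (\Aut\,X \setminus \partial A_n^\rep) \cap (\Aut\,X \setminus \partial A_n^\sad) \big) \]
is residual in $\Aut\,X$, and I would intersect $\mathcal{R}$ with the residual sets supplied by earlier results in the paper so that $f$ remains generic in the cumulative sense.

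The core step is then to show that every $f \in \mathcal{R}$ satisfies the desired inclusion. Fix $p \in \tam(f) \setminus (\rne(f) \cup \rne(f^{-1}))$ and an arbitrary neighbourhood $U$ of $p$, and choose a basic $U_n \subset U$ with $p \in U_n$. Applied with any compact $K$ containing $p$ and an arbitrarily small neighbourhood $W$ of $f$, Lemma \ref{l:creating-cycles}(a) produces an $h \in W$ with $p \in \att(h)$; then $h \in A_n^\att$, so $f \in \overline{A_n^\att}$. Because $A_n^\att$ is open and $f \notin \partial A_n^\att$ by membership in $\mathcal{R}$, we conclude $f \in A_n^\att$, i.e.\ $U \cap \att(f) \neq \varnothing$. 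Hence $p \in \overline{\att(f)}$. Parts (b) and (c) of Lemma \ref{l:creating-cycles} yield $p \in \overline{\rep(f)}$ and $p \in \overline{\sad(f)}$ by formally identical arguments.

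The substantive geometric content has been absorbed entirely into Lemma \ref{l:creating-cycles}: producing, arbitrarily close to a given $f$, an automorphism that turns a prescribed tame non-robust point into a hyperbolic periodic point of prescribed type is where the density property, Varolin's theorem, and the tameness hypothesis all do their work. Once that local perturbation lemma is in hand, what remains is a routine openness-plus-Baire packaging, and I expect no substantive obstacle. The only ancillary point worth checking is that the hyperbolic continuation of an attracting (respectively saddle, repelling) periodic point stays inside the basic open set $U_n$ for sufficiently small perturbations, which is immediate from continuous dependence of the continuation on the map.
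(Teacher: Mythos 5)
Your proposal is correct and follows essentially the same route as the paper: define, for each element of a countable basis, the open set of automorphisms with an attracting (resp.\ repelling, saddle) periodic point meeting it, take the residual set avoiding all the boundaries, and then use Lemma \ref{l:creating-cycles} to place a generic $f$ in the closure (hence, by construction, the interior) of the relevant open set. The paper writes out only the attracting case and declares the other two analogous, but the argument is the one you give.
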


\begin{proof}
Let $\{U_n:n\geq 1\}$ be a countable basis for the topology of $X$.  Let $S_n$ be the open set of all $f\in\Aut\,X$ such that $f$ has an attracting cycle intersecting $U_n$.  Then $G=\bigcap \Aut\,X\setminus \partial S_n$ is a residual subset of $\Aut\,X$.  We will show that if $f\in G$ and $p\in \tam(f)\setminus(\rne(f)\cup\rne(f^{-1}))$, then $p\in\overline{\att(f)}$.

Let $f\in G$.  If we show that for all $n$ such that  $p\in U_n$ we have  $f\in S_n$, it immediately follows that  $p\in\overline{\att(f)}$.  By definition of $G$, it suffices to show that $f\in \overline S_n$, which is immediate from Lemma \ref{l:creating-cycles}.  The proofs for $\rep(f)$ and $\sad(f)$ are analogous.
\end{proof}

Let $\Lambda$ be a subset of a topological manifold and let $f: \Lambda\to \Lambda$ be continuous.  The map $f$ is said to be {\it chaotic} (in the sense of Touhey) if for every two nonempty open subsets of $\Lambda$, there is a cycle that visits both of them.  We say that the map is {\it Devaney-chaotic} on $\Lambda$ if the periodic points of $f$ are dense in $\Lambda$ and there is a point in $\Lambda$ whose forward orbit is dense.  See \cite{BBCDS1992} and \cite{Touhey1997}.  In general, if $f$ is chaotic, then $f$ is Devaney-chaotic, and if $\Lambda$ is perfect, then the two notions are equivalent.

\begin{theorem}  \label{t:chaotic-and-non-chaotic-Julia-sets}
For a generic automorphism $f$ of a Stein manifold $X$ with the density property, the following hold.
\begin{enumerate}
\item[(a)]  $J_f^*$ is not compact.
\item[(b)]  $J_f^*=J_f\cap\tam(f)$.
\item[(c)]  $J_f^* \subset \overline{\att(f)} \cap \overline{\rep(f)}$.
\item[(d)] $f$ is chaotic on $J_f^*$.
\item[(e)]  $J_f^*$ is the largest  forward invariant  subset of $X$ on which $f$ is chaotic.
\item[(f)]   $J_f^*$ is perfect. 
\item[(g)]  The non-chaotic Julia set $M_f$ is completely invariant, contains no periodic points, and the total orbit of every point in $M_f$ is relatively compact.
\end{enumerate}
\end{theorem}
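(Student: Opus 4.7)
The plan is to establish the seven parts in order, leveraging the earlier characterisations of $J_f$ and $\tam(f)$ together with classical hyperbolic dynamics (the $\lambda$-lemma and the Smale--Birkhoff homoclinic theorem).

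For \textbf{(a)}, I would apply Lemma \ref{l:create-point} in a Baire argument: for each compact $K\subset X$, the set of $g\in\Aut\,X$ admitting a saddle fixed point outside $K$ is open and dense, so generically $\sad(f)$ is not relatively compact and $J_f^*=\overline{\sad(f)}$ is not compact. For \textbf{(b)}, I first observe that $J_f\cap\tam(f)$ is closed: by Corollary \ref{c:stable-unstable-Julia}, $X\setminus\tam(f)=J_f\cap O$, where $O=\bigcup_K\{p:(p,f)\in (T_K^+\cup T_K^-)^\circ\}$ is open, whence $J_f\cap\tam(f)=J_f\setminus O$. Each saddle periodic point $p$ is tame vacuously, since arbitrarily near $(p,f)$ one finds pairs $(y,g)$ with $y$ lying off both the local stable and local unstable manifolds of the nearby saddle of $g$, whose $g$-orbit leaves any given compact in both time directions; hence $(p,f)\notin (T_K^+\cup T_K^-)^\circ$ for any $K$. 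Thus $\sad(f)\subset J_f\cap\tam(f)$ and, taking closures, $J_f^*\subset J_f\cap\tam(f)$. The reverse inclusion follows from Lemma \ref{t:closures-of-periodic} combined with Corollary \ref{c:stable-unstable-Julia}. Part \textbf{(c)} is then immediate from (b) and Lemma \ref{t:closures-of-periodic}.

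The technical core is part \textbf{(d)}. Given nonempty open sets $U,V\subset J_f^*$, I would pick saddle periodic points $p\in U$ and $q\in V$ and, passing to a common iterate, assume both are fixed. Using the distinguished fixed saddle $q_0$ from Theorem \ref{t:stable-unstable} whose stable and unstable manifolds are dense in $J_f^+$ and $J_f^-$ respectively, and combining this density with the $\lambda$-lemma and the Kupka--Smale transversality from Theorems \ref{t:Kupka-Smale-1} and \ref{t:Kupka-Smale-2}, I would produce transverse heteroclinic intersections linking $W^u_f(p)$, $W^s_f(q_0)$, $W^u_f(q_0)$, and $W^s_f(q)$ in both directions. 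Assembling these into a transverse heteroclinic cycle through $p$ and $q$ and invoking the Smale--Birkhoff homoclinic theorem yields a hyperbolic invariant horseshoe whose periodic saddles visit both $U$ and $V$. I expect this to be the main obstacle, as the existence of transverse intersections must be extracted carefully from the density statements in our holomorphic setting. Part \textbf{(f)} then follows immediately: a Touhey-chaotic map on $J_f^*$ has dense periodic orbits with no isolated points (if $\{p\}$ were open in $J_f^*$, any cycle visiting $\{p\}$ would be $p$'s orbit, which cannot visit a disjoint open subset of $J_f^*$), so $J_f^*$ is perfect. For \textbf{(e)}, any forward invariant $\Lambda$ on which $f$ is chaotic has dense periodic orbits by Touhey chaos, each hyperbolic by Theorem \ref{t:Kupka-Smale-1}; a topological transitivity argument rules out attracting or repelling cycles properly contained in $\Lambda$ (since these lie isolated in $\Omega_f$ and produce a clopen decomposition of $\Lambda$ preventing chaos), so outside the degenerate case that $\Lambda$ is itself a single attracting or repelling cycle, all periodic orbits in $\Lambda$ are saddles and hence $\Lambda\subset\overline{\sad(f)}=J_f^*$.

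Finally, for \textbf{(g)}: complete invariance of $M_f=J_f\setminus J_f^*$ follows from that of $J_f$ and $J_f^*$. Any periodic point in $J_f$ is hyperbolic by Theorem \ref{t:Kupka-Smale-1}, and neither attracting nor repelling (those lie in $F_f^+\cup F_f^-$, disjoint from $J_f$ by Corollary \ref{c:stable-unstable-Julia}), hence a saddle in $J_f^*$ and not in $M_f$. For the relative compactness of the total orbit of $p\in M_f=J_f\setminus\tam(f)$, fix a compact $K$ with $(p,f)\in(T_K^+\cup T_K^-)^\circ$; I claim the entire $f$-orbit of $p$ lies in $K$. Indeed, if $f^j(p)\notin K$ for some $j\geq 0$, then by continuity of iteration a whole neighbourhood of $(p,f)$ in $X\times\Aut\,X$ avoids $T_K^+$, so this neighbourhood lies entirely in $T_K^-$; this places $p$ in $\rne(f^{-1})$, contradicting $p\in J_f^-$. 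The symmetric argument applied to backward iterates $f^{-j}(p)$ completes the proof.
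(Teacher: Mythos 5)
Your treatment of parts (a), (c), (f), and (g) is essentially the paper's, and your direct argument for the relative compactness in (g) (via the definition of $T_K^\pm$) is a clean reformulation of the paper's contrapositive. However, there are two real gaps.

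The serious one is in part (d), which is the technical core. You propose to produce, for the generic $f$ itself, transverse heteroclinic intersections among $W^u_f(p)$, $W^s_f(q_0)$, $W^u_f(q_0)$, $W^s_f(q)$, citing the density of $W^s_f(q_0)$ in $J_f^+$ and of $W^u_f(q_0)$ in $J_f^-$ together with the $\lambda$-lemma and Kupka--Smale transversality. But density does not create intersections: $W^u_f(p)$ is a lower-dimensional immersed submanifold of $X$ which may well avoid the dense set $W^s_f(q_0)$ entirely (just as a line can miss a dense subset of the plane). Kupka--Smale only guarantees that intersections which exist are transverse, and the $\lambda$-lemma only propagates an already-existing transverse intersection; neither furnishes one. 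Indeed, if one could force heteroclinic connections between arbitrary saddles of the generic $f$ by such soft arguments, one would be well on the way to the generic large-cycles closing lemma, which the paper explicitly flags as an open problem. The paper instead runs a Baire-category argument: for a countable basis $\{U_n\}$, the set $S_{m,n}$ of automorphisms admitting a saddle cycle through both $U_m$ and $U_n$ is open, and one works inside the residual set $G=\bigcap_{m,n}\Aut X\setminus\partial S_{m,n}$. For $f\in G$ the claim reduces to $f\in\overline{S_{m,n}}$, i.e.\ to a \emph{perturbation} statement: an Anders\'en--Lempert construction (as in \cite[Section 5]{AL2019}) perturbs $f$ to an automorphism with a saddle cycle through $U_m$ and $U_n$. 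No heteroclinic connection for the unperturbed $f$ is ever needed. You should restructure (d) around this mechanism.

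A smaller gap is in your argument for $\sad(f)\subset\tam(f)$ in part (b). You claim saddle points are ``vacuously'' tame because near $(p,f)$ one finds $(y,g)$ with $y$ off the local stable and unstable manifolds of the nearby saddle, ``whose $g$-orbit leaves any given compact in both time directions''. Being off both local manifolds only guarantees that the orbit leaves a small neighbourhood of the saddle, not that it escapes an arbitrary compact $K$; the latter is precisely the nontrivial assertion. The paper's route is to combine the $\lambda$-lemma with Remark~\ref{r:def-tame} (and implicitly the global information from Theorem~\ref{t:stable-unstable} that $W^s_f(q)$ and $W^u_f(q)$ are unbounded) to exhibit nearby orbits escaping to $\infty_X$ in both time directions; you need an argument of that kind, not the vacuity claim. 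Finally, in (e) you correctly note the degenerate case of a single attracting or repelling cycle, which the paper's ``clearly'' glosses over, but you should finish the thought: Touhey chaos on such a finite cycle is trivially satisfied, so the statement of (e) is to be read with $\Lambda$ infinite (or one should observe that for a generic $f$ any chaotic $\Lambda$ meeting $\att(f)\cup\rep(f)$ must be exactly one cycle, and those are excluded from the comparison).
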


Note that by (a), $J_f^*$ is not empty.  We do not know whether or not $M_f$ is empty for a generic automorphism $f$.

\begin{proof}
(a)  That $J_f^*$ is not compact for generic $f$ is immediate from Remark \ref{r:infinitely-many}.

(b)  By Lemma \ref{t:closures-of-periodic}, $J_f\cap\tam(f) \subset \overline{\sad(f)}$.  Conversely, we clearly have $\overline{\sad(f)}\subset J_f$.  We will sketch a proof that $\overline{\sad(f)}\subset \tam(f)$.  Let $x\in\overline{\sad(f)}$ and take $q\in\sad(f)$ close to $x$.  Choose $y\in W_f^s(q)$ and $z\in W_f^u(q)$ outside a big compact set.  By the lambda lemma, there are points $y'$ close to $y$, $x'$ close to $q$, and $z'$ close to $z$ such that the forward $f$-orbit of $y'$ contains $x'$ and $z'$ (in this order).  With such points $x'$ we can form a sequence $(x_n)$ with $x_n\to x$ such that there are integers $j_n, k_n\geq 0$ with $f^{j_n}(x_n)\to\infty_X$ and $f^{-k_n}(x_n)\to\infty_X$.  This shows that $x\in\tam(f)$ (see Remark \ref{r:def-tame}).

(c) Lemma \ref{t:closures-of-periodic} together with (b) imply that  $J_f^* \subset \overline{\att(f)} \cap \overline{\rep(f)}$.

(d)  Take a countable basis $\{U_n : n\geq 1\}$ for the topology of $X$.  Let $S_{m,n}$ be the open subset of $\Aut\,X$ of automorphisms with a saddle cycle through $U_m$ and $U_n$.  Then $G=\bigcap \Aut\,X\setminus \partial S_{m,n}$ is a residual subset of $\Aut\,X$ and we claim that every $f\in G$ is chaotic on $J_f^*$.  So let $f\in G$ and take $U_m$ and $U_n$ both intersecting $J_f^*$.  We need $f\in S_{m,n}$, that is, we need a saddle cycle for $f$ through $U_m$ and $U_n$ (notice that the saddle cycle is necessarily contained in $J_f^*$).   By the definition of $G$, it suffices to show that $f\in \overline S_{m,n}$, which follows as in \cite[Section~5]{AL2019}.

(e)  By Theorem \ref{t:Kupka-Smale-1}, every periodic point of $f$ is hyperbolic.  Let $\Lambda$ be a forward invariant subset of $X$ on which $f$ is chaotic.  Then $\Lambda$ lies in the closure of the set of periodic points of $f$.  If there is an attracting cycle in $\Lambda$, then clearly $f$ cannot be chaotic on $\Lambda$.  Since $f$ is invertible, the same is true if there is a repelling cycle in $\Lambda$. Thus $\Lambda\subset \overline{\sad(f)}$.

(f) If $J_f^*=\overline{\sad(f)}$ had an isolated point, it would be a saddle point.  Being non-compact, $J_f^*$ could not consist of the cycle of that saddle point alone. But then $f$ would not be chaotic on $J_f^*$.

(g)  Since $J_f$ and $J_f^*$ are completely invariant, so is $M_f$.  Attracting and repelling periodic points lie outside $J_f$ and saddle periodic points lie in $J_f^*$, so none lie in $M_f$ by Theorem \ref{t:Kupka-Smale-1}.  Suppose that the total orbit of $x\in J_f$ is not relatively compact.  Say the forward orbit of $x$ is not relatively compact.  Let $U$ be a neighbourhood of $x$ and $K\subset X$ be compact.  We will show that there is $y\in U$ such that neither the forward nor the backward orbit of $y$ is contained in $K$.  This implies that $x$ is tame.  Find $m\geq 1$ such that $f^m(x)\notin K$ and choose a neighbourhood $V\subset U$ of $x$ such that $f^m(V)\cap K=\varnothing$.  Since $x\notin F_f^-$, by Montel's theorem there is $y\in V$ whose backward orbit is not contained in $K$; neither is the forward orbit of $y$.
\end{proof}

\section{Homoclinic points} 
\label{sec:homoclinic}

\noindent
First of all, note that homoclinic and heteroclinic points of saddle periodic points lie in the Julia set.

\begin{proposition}   \label{p:homoclinic-are-tame}
Transverse homoclinic and heteroclinic points of an automorphism $f$ of a Stein manifold $X$ are tame.
\end{proposition}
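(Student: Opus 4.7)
The plan is to verify tameness via the characterisation in Remark \ref{r:def-tame}. Let $p_1$ and $p_2$ be the saddle periodic points with $q\in W^s_f(p_1)\cap W^u_f(p_2)$; the total $f$-orbit of $q$, together with the cycles of $p_1$ and $p_2$, forms a compact subset of $X$.

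I would first show that $q\notin \rne(f)\cup \rne(f^{-1})$. Transversality at $q$ implies that every neighbourhood of $q$ contains points off $W^s_f(p_1)$; hyperbolic expansion near the cycle of $p_1$ then pushes the forward iterates of such a point out along $W^u_f(p_1)$ once the stable component has been exhausted. Hence no neighbourhood of $(q,f)$ in $X\times\Aut\,X$ can lie entirely in any $T_K^+$, and $q\notin\rne(f)$; the case of $\rne(f^{-1})$ is symmetric.

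By Remark \ref{r:def-tame}, it remains to produce sequences $x_n\to q$ in $X$, $f_n\to f$ in $\Aut\,X$, and integers $j_n,k_n\geq 0$ with $f_n^{j_n}(x_n)\to \infty_X$ and $f_n^{-k_n}(x_n)\to \infty_X$. My plan is to combine the $\lambda$-lemma at the transverse intersection with Anders\'en--Lempert/Varolin-type perturbations afforded by the density-property setting: for each term $K_n$ of an exhaustion of $X$ by compact sets, I would construct a small perturbation $f_n$ of $f$ and a point $x_n$ near $q$ so that the $f_n$-orbit of $x_n$ shadows the $f$-orbit of $q$ for many iterations, then drifts off the perturbed stable (resp.\ unstable) manifolds via hyperbolic expansion, and is finally steered outside $K_n$ in both forward and backward time.

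The main difficulty is coordinating forward and backward escape of a single sequence $x_n$ while maintaining $f_n\to f$ in the compact-open topology. Here the transverse intersection at $q$ is essential, as it supplies nearby points lying off both $W^s_f(p_1)$ and $W^u_f(p_2)$; combined with the $\lambda$-lemma---which ensures that forward iterates of a disk through $q$ transverse to $W^s_f(p_1)$ accumulate $C^1$ on compact pieces of $W^u_f(p_1)$ near $p_1$, and symmetrically for the backward dynamics---one can choose escape targets $a_n,b_n\notin K_n$ and engineer $f_n$ to drive a single $x_n\to q$ to $a_n$ in $j_n$ forward steps and to $b_n$ in $k_n$ backward steps.
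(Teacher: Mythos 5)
Your plan diverges from the paper's proof in a way that actually causes trouble: you propose to perturb $f$ using Anders\'en--Lempert/Varolin-type automorphisms, but the proposition is stated for an \emph{arbitrary} Stein manifold $X$, with no density property assumed, so such perturbations are simply not available. The paper's proof is entirely non-perturbative (it even notes in parentheses that perturbing $f$ is permitted by the definition of tameness but is not necessary): it exhibits actual $f$-orbits that come arbitrarily close to $p$ and leave $K$ in both forward and backward time, using only the $\lambda$-lemma applied to $f$ itself.

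The coordination difficulty you identify --- getting a single point whose forward and backward orbit both escape $K$ --- is resolved in the paper not by perturbing $f$ but by a two-step $\lambda$-lemma argument. For the heteroclinic point $p\in W_f^s(x)\cap W_f^u(y)$, pick $z\in W_f^s(y)\setminus K$ and $w\in W_f^u(x)\setminus K$ (these exist because on a Stein manifold the immersed stable and unstable manifolds, being nonconstant holomorphic images of $\C^m$, cannot have relatively compact image --- a point you would need to justify when choosing your escape targets $a_n,b_n$). Apply the $\lambda$-lemma to a disk $D_1$ transverse to $W_f^s(y)$ at $z$ and to the transverse disk $D_2\subset W_f^s(x)$ through $p$: this yields $p'\in D_2$ whose backward orbit in finitely many steps reaches $D_1$, hence leaves $K$. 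Now apply the $\lambda$-lemma again to a small disk $D_3$ transverse to $W_f^s(x)$ at $p'$ and a disk $D_4$ transverse to $W_f^u(x)$ at $w$: this yields $p''\in D_3$ whose forward orbit leaves $K$. The key observation is that backward escape of $p'$ occurs in \emph{finitely many} steps, so if $D_3$ is small enough the nearby point $p''$ also has backward orbit leaving $K$, while $p''$ is still close to $p'$ and hence to $p$. Your perturbative scheme would also have to grapple with the same finitude-and-continuity issue, but in addition it introduces an unneeded dependence on the geometry of $X$; moreover your claim that ``hyperbolic expansion near the cycle of $p_1$ pushes forward iterates out along $W^u_f(p_1)$'' does not by itself yield escape from a given compact $K$, since accumulation on $W^u_f(p_1)$ must be combined with an explicit choice of an escape point on $W^u_f(p_1)$ outside $K$.
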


\begin{proof}
Let $x$ and $y$ be saddle periodic points, take $p\in W_f^s(x)\cap W_f^u(y)$, and let $K\subset X$ be compact.  We show that there are orbits of $f$ that come arbitrarily close to $p$ and leave $K$ in forward and backward time.  (The definition of tameness would allow us to perturb $f$, but it is not necessary.)  By passing to an iterate of $f$, we can assume that $x$ and $y$ are fixed.

Take $z\in W_f^s(y)\setminus K$ and a small polydisc $D_1$ transverse to $W_f^s(y)$ centred at $z$.  As a polydisc $D_2$ transverse to $W_f^u(y)$ centred at $p$, choose simply a small portion of $ W_f^s(x)$ around $p$.  By the lambda lemma, there is an orbit starting in $D_1$ and ending in a point $p' \in D_2 \subset W_f^s(x)$.  Choose a small polydisc $D_3$ transverse to $W_f^s(x)$ centred at $p'$.  Choose also $w\in W_f^u(x)\setminus K$ and a small polydisc $D_4$ transverse to $W_f^u(x)$ centred at $w$.  By the lambda lemma, there is an orbit starting at a point $p''\in D_3$ and ending in $D_4$.  If $D_3$ is small enough, $p''$ will be close enough to $p'$ that the backward orbit of $p''$ leaves $K$.
\end{proof}

The following result is an immediate consequence of the proposition, Lemma~\ref{l:stable-in-julia}, and Theorem \ref{t:chaotic-and-non-chaotic-Julia-sets}.

\begin{corollary}   \label{c:transverse-homoclinic-are-in-chaotic-Julia}
For a generic automorphism $f$ of a Stein manifold $X$ with the density property, all transverse homoclinic and heteroclinic points of saddle periodic points of $f$ lie in the chaotic Julia set $J_f^*$ of $f$.
\end{corollary}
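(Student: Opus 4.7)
The plan is to combine the three cited results in a straightforward chain of inclusions. Let $p$ be a transverse homoclinic or heteroclinic point, so by definition there are saddle periodic points $x, y$ of $f$ with $p \in W_f^s(x) \cap W_f^u(y)$, and the intersection is transverse at $p$. The goal is to place $p$ simultaneously in the Julia set $J_f$ and in the tame set $\tam(f)$, because Theorem~\ref{t:chaotic-and-non-chaotic-Julia-sets}(b) identifies $J_f^*$ with $J_f \cap \tam(f)$ for generic $f$.

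First I would verify that $p \in J_f$. Passing to an iterate (which does not change $J_f$), we may assume $x$ and $y$ are fixed points. Then Lemma~\ref{l:stable-in-julia} applied to $f$ at $x$ gives $W_f^s(x) \subset J_f^+$, and applied at $y$ (equivalently, applied to $f^{-1}$ using $W_f^u(y) = W_{f^{-1}}^s(y)$) it gives $W_f^u(y) \subset J_f^-$. Hence $p \in J_f^+ \cap J_f^- = J_f$. This step uses nothing about genericity or transversality.

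Next I would show that $p \in \tam(f)$ by directly invoking Proposition~\ref{p:homoclinic-are-tame}, which asserts exactly that transverse homoclinic and heteroclinic points of an automorphism of a Stein manifold are tame. Note that here one does not even need to perturb $f$; the lambda-lemma construction in the proof of that proposition produces unperturbed orbits near $p$ that escape any prescribed compact set in both forward and backward time.

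Finally I would appeal to Theorem~\ref{t:chaotic-and-non-chaotic-Julia-sets}(b): for a generic automorphism $f$ one has $J_f^* = J_f \cap \tam(f)$. Combining the two memberships $p \in J_f$ and $p \in \tam(f)$ yields $p \in J_f^*$, which is the desired conclusion. There is really no obstacle here; the substantive work has already been done in Proposition~\ref{p:homoclinic-are-tame} (where the lambda-lemma argument feeds into the definition of tameness) and in the generic identification $J_f \cap \tam(f) = \overline{\sad(f)}$ from Theorem~\ref{t:chaotic-and-non-chaotic-Julia-sets}(b), whose proof rests on Lemma~\ref{t:closures-of-periodic}. The only point to be careful about is that the ambient manifold is assumed Stein with the density property, so that these earlier generic statements apply.
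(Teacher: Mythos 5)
Your proof is correct and follows exactly the route the paper indicates: the corollary is an immediate consequence of Proposition~\ref{p:homoclinic-are-tame} (tameness), Lemma~\ref{l:stable-in-julia} (membership in $J_f$, after passing to an iterate so the saddle points are fixed), and the generic identification $J_f^*=J_f\cap\tam(f)$ from Theorem~\ref{t:chaotic-and-non-chaotic-Julia-sets}(b). The paper leaves the assembly to the reader; you simply spelled it out, and correctly.
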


In the proof of the next result, we produce transverse homoclinic points.

\begin{theorem}\label{t:homoclinic-perturbation}
Let $X$ be a Stein manifold with the density property.  Let $p$ be a saddle periodic point of $f\in\Aut\,X$.  Then every neighbourhood $W$ of $f$ in $\Aut\,X$ contains an automorphism with $p$ as a saddle periodic point with a transverse homoclinic point. 
\end{theorem}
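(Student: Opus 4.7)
After passing to an iterate of $f$ if necessary, we may assume $p$ is a saddle fixed point: a transverse homoclinic point of $\tilde f^k$ at the saddle fixed point $\tilde p$ of $\tilde f^k$ is also a transverse homoclinic point of $\tilde f$ at the saddle periodic point $\tilde p$, since the stable and unstable manifolds agree. We may shrink $W$ to $\{g\in\Aut\,X: d_K(g,f)<\epsilon\}$ for a holomorphically convex compact $K\subset X$ and some $\epsilon>0$. The plan is to construct $\tilde f=\phi\circ f$, where $\phi\in\Aut\,X$ is close to the identity on $f(K)$ (so $\tilde f\in W$) and sends a suitable forward iterate $y_N=f^N(y_0)$ of a point $y_0\in\Gamma^u_f(p,r)\setminus\{p\}$ directly onto a target point $z\in W^s_f(p)$, with a prescribed invertible derivative that forces a transverse intersection of the perturbed invariant manifolds.

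Pick $y_0\in\Gamma^u_f(p,r)\setminus\{p\}$ close to $p$ and set $y_n=f^n(y_0)$. The global unstable manifold $W^u_f(p)=\bigcup_{n\ge 0}f^n(\Gamma^u_f(p,r))$ is an immersed copy of $\C^{n-m}$ in $X$; composing with a closed embedding of the Stein manifold $X$ into some $\C^N$ and invoking Liouville's theorem, $W^u_f(p)$ is unbounded in $X$, so for a suitable $y_0$ and $N\ge 1$ large, $y_N\notin f(K)$. Likewise $W^s_f(p)\cong\C^m$ is unbounded in $X$, so we find $z\in W^s_f(p)\setminus f(K)$. By Varolin's theorem (as used in Lemma \ref{l:create-point}), we construct $\phi\in\Aut\,X$ arbitrarily close to the identity on $f(K)$ with $\phi(y_N)=z$ and with any prescribed invertible derivative $A=d_{y_N}\phi: T_{y_N}X\to T_zX$. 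Setting $\tilde f=\phi\circ f$, we have $\tilde f\in W$ once the approximation is good enough, and by Lemma \ref{l:saddlepert}, $\tilde f$ has a saddle fixed point $\tilde p\in U$.

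Because $\phi\approx\id$ on $f(K)\supset\{y_1,\ldots,y_{N-1}\}$, the iterates $\tilde f^i(y_0)$ for $i<N$ remain close to $y_i$, and $\tilde f^N(y_0)\approx\phi(y_N)=z$. Choose $A$ so that $d_{y_0}\tilde f^N$ maps $T_{y_0}\Gamma^u_f(p,r)$ to a subspace of $T_zX$ transverse to $T_zW^s_f(p)$. By Lemmas \ref{l:delicate} and \ref{l:delicate2}, the local invariant manifolds of $\tilde f$ at $\tilde p$ are $C^1$-close to those of $f$ at $p$, and the global stable manifold $W^s_{\tilde f}(\tilde p)$ converges to $W^s_f(p)$ on compacts as $\phi\to\id$. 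An implicit function argument, adjusting $y_0$ to a nearby point $\hat y_0\in\Gamma^u_{\tilde f}(\tilde p,r)$, upgrades the approximate equality $\tilde f^N(y_0)\approx z$ to an exact identity $\tilde f^N(\hat y_0)\in W^s_{\tilde f}(\tilde p)$ with transverse intersection there, since transversality is open; the landing point is the desired transverse homoclinic point of $\tilde p$ for $\tilde f$. The main technical obstacle is arranging $y_N,z\notin f(K)$ so that Varolin's sliding automorphism applies, which is secured by the unboundedness in $X$ of the invariant manifolds $W^u_f(p)$ and $W^s_f(p)$; a secondary concern is controlling the compounding $C^0$-errors $\sum_i L^i\|\phi-\id\|_{f(K)}$ over $N$ iterations, handled by taking $\phi$ close enough to $\id$.
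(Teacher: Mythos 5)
Your proof is correct and follows essentially the same strategy as the paper's: use Varolin's theorem to graft the unstable manifold onto the stable manifold at points outside the compact set, with a prescribed derivative chosen to force transversality, and then invoke Lemmas \ref{l:delicate} and \ref{l:delicate2} to conclude that the perturbed invariant manifolds of the nearby saddle intersect transversely. The paper writes $\tilde f = f\circ h$ and additionally requires $h$ to fix (with identity derivative) the intermediate orbit segments on the global stable and unstable manifolds, so that the perturbed orbit traces the concatenated $f$-orbit exactly and the compounding-error estimate you handle at the end becomes unnecessary --- a cosmetic difference, not a substantive one.
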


\begin{proof}
We prove the theorem for a saddle fixed point.  The case of a periodic point is similar.  Let $d$ be a distance inducing the topology on $X$.  Find $\epsilon>0$ and a compact subset $K\subset X$ such that 
\[ \{g\in {\rm Aut}(X): d_K(f,g)<\epsilon\}\subset W. \]
We may assume that the local stable and unstable manifolds $\Gamma^s_f(p,r)$ and  $\Gamma^u_f(p,r)$ are contained in a coordinate ball centred at $p$.  Let $r>0$ be given by Lemma \ref{l:delicate}.  By enlarging $K$ if necessary, we may assume that $K$ is holomorphically convex and contains the polydisc $\overline{\Delta^n(0,r)}$ in its interior.
 
Let $x_0 \in W_f^s(p)\setminus K$ such that $f^n(x_0) \in K$ for all $n\geq 1$.  Let $y_0\in W_f^u(p)\setminus K$ such that $f^{-n}(y_0)\in K$ for all $n\geq 1$.  Find $n_s\geq 1$ such that $f^{n_s}(x_0)\in \Gamma^s_f(p,r)$ and $n_u\geq 1$ such that $f^{-n_u}(y_0)\in \Gamma^u_f(p,r)$.  

Let $U$ be a Runge neighbourhood of $K$ containing neither $x_0$ nor $y_0$.  Let $V$ be a neighbourhood of $y_0$ and let $\varphi : [0,1]\times V\to X$ be a $C^1$ isotopy such that for all $t\in [0,1]$,
\begin{enumerate}
\item $\varphi_t : V\to X$ is holomorphic and injective,
\item $\varphi_t(V)$ is disjoint from $U$, 
\item $U\cup \varphi_t(V)$ is Runge,
\item $\varphi_0$ is the inclusion of $V$ into $X$, 
\item $\varphi_1(y_0)=x_0$ and $d_{y_0}\varphi_1(T_{y_0}W_f^u(p))$ is transverse to $T_{x_0}W_f^s(p).$
\end{enumerate}

By the Anders\'en-Lempert theorem \cite[Theorem 4.10.5]{Forstneric2017}, there is a sequence $(\Phi_j)$ in $\Aut\,X$ such that $\Phi_j\to \id$ on $U$ and $\Phi_j\to \varphi_1$ on $V$, uniformly on compact subsets.  Let $ f_j:= f\circ \Phi_j$. 

Let $\psi:D\to \Gamma^u_f(p,r)$ be a holomorphic graph parametrisation of the local unstable manifold  $\Gamma^u_f(p,r)$ near $f^{-n_u}(y_0)$, defined on a small polydisc $D$. If $D$ is small enough, then $(f^{n_u}\circ \psi)(D)$ is a holomorphically embedded piece of the unstable manifold $W^u_f(p)$ containing $y_0$ and contained in the neighbourhood $V$. Then $(\varphi_1\circ f^{n_u}\circ \psi)(D)$ is an embedded complex submanifold which intersects the stable manifold $W^u_f(p)$ transversally at $x_0$.  Finally, $(f^{n_s}\circ \varphi_1\circ f^{n_u}\circ \psi)(D)$ is an embedded complex submanifold which intersects the local stable manifold $\Gamma^s_f(p,r)$ transversally at $f^{n_s}(x_0)$.

 For large $j$, let $p_j:=\eta(f_j)$ be the saddle fixed point near $p$ given by Lemma \ref{l:saddlepert}.  Then by Lemma \ref{l:delicate}, there is a  holomorphic graph parametrisation $\psi_j:D\to \Gamma^u_{ f_j}(p_j,r)$ which converges uniformly to $\psi$ as $j\to\infty$.  It follows that the holomorphic map $ f_j^{n_s+n_u}\circ \psi_j$ converges uniformly to $f^{n_s}\circ \varphi_1\circ f^{n_u}\circ \psi$, so if $j$ is large enough, the embedded complex submanifold $ (f_j^{n_s+n_u}\circ \psi)(D)$ intersects the 
 local stable manifold $\Gamma^s_{f_j}(p_j,r)$ transversally in a homoclinic point for $f_j$.  Conjugation by a small perturbation of the identity makes the point $p_j$ coincide with $p$.
\end{proof}

\begin{corollary}
For a generic automorphism $f$ of a Stein manifold $X$ with the density property, the chaotic Julia set $J_f^*$ is the closure of the subset of transverse homoclinic points.
\end{corollary}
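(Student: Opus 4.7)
The plan is to prove the two inclusions separately. For the inclusion that the closure of the set of transverse homoclinic points is contained in $J_f^*$, Corollary \ref{c:transverse-homoclinic-are-in-chaotic-Julia} already gives that every transverse homoclinic point of a saddle periodic point of $f$ lies in $J_f^*$; since $J_f^*$ is closed (it is defined as a closure), the conclusion follows. For the reverse inclusion, since $J_f^* = \overline{\sad(f)}$ by definition, it suffices to show that for a generic $f$, every saddle periodic point of $f$ is approached by transverse homoclinic points of $f$.

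To establish this via a standard residual-set argument of the type used repeatedly in the paper, I would fix a countable basis $\{U_n : n \geq 1\}$ for the topology of $X$ and define
\[ S_n := \{g \in \Aut\, X : g \text{ has a transverse homoclinic point in } U_n\}. \]
The first task is to check that $S_n$ is open. If $q \in U_n$ is a transverse homoclinic point of a saddle periodic point $p$ of $g$, then under small perturbations the saddle point $p$ moves continuously (Lemma \ref{l:saddlepert}) and the local stable and unstable manifolds vary continuously (Lemmas \ref{l:delicate} and \ref{l:delicate2}), so the transverse intersection at $q$ persists, yielding a transverse homoclinic point of the perturbed automorphism near $q$, still inside $U_n$.

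The key step is to show that if $g \in \Aut\, X$ has a saddle periodic point in $U_n$, then $g \in \overline{S_n}$. Given such $g$ and any neighbourhood $W$ of $g$, Theorem \ref{t:homoclinic-perturbation} produces $\tilde g \in W$ with a saddle periodic point $\tilde p \in U_n$ together with a transverse homoclinic point $q \in W^s_{\tilde g}(\tilde p) \cap W^u_{\tilde g}(\tilde p)$. The point $q$ need not lie in $U_n$, but writing $k$ for the minimal period of $\tilde p$, the sequence $(\tilde g^{km}(q))_{m \geq 0}$ converges to $\tilde p \in U_n$ by definition of the stable manifold, and each $\tilde g^{km}(q)$ is itself a transverse homoclinic point of $\tilde p$, because $d\tilde g^{km}$ is invertible and hence preserves the transversality of the intersection of $W^s_{\tilde g}(\tilde p)$ and $W^u_{\tilde g}(\tilde p)$ at $q$. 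Thus $\tilde g^{km}(q) \in U_n$ for all large $m$, witnessing $\tilde g \in S_n$ and therefore $g \in \overline{S_n}$.

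To conclude, let $G := \bigcap_n (\Aut\, X \setminus \partial S_n)$, a residual subset of $\Aut\, X$, and intersect $G$ with the residual subsets associated with the earlier results of the paper. For any $f \in G$ and any $n$ such that $U_n$ meets $\sad(f)$, the previous step gives $f \in \overline{S_n}$; since $f \notin \partial S_n$ and $S_n$ is open, $f \in S_n$, so $U_n$ contains a transverse homoclinic point of $f$. Consequently, transverse homoclinic points of $f$ are dense in $\sad(f)$, and therefore dense in $J_f^* = \overline{\sad(f)}$. The main obstacle is ensuring that the transverse homoclinic point produced by Theorem \ref{t:homoclinic-perturbation} can be placed inside the prescribed open set $U_n$; this is precisely where the appeal to the lambda lemma is essential, allowing us to transport the homoclinic point along its forward orbit into any prescribed neighbourhood of the saddle.
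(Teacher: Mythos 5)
Your proposal is correct and follows essentially the same residual-set argument as the paper, with the minor cosmetic difference that your $S_n$ requires the transverse homoclinic point itself to lie in $U_n$ while the paper's $S_n$ requires the saddle periodic point to lie in $U_n$ (both choices then handle the placement of the homoclinic point in $U_n$ by iterating it forward toward the saddle). One small slip: your closing remark attributes the transport of the homoclinic point into $U_n$ to the lambda lemma, but as your own middle paragraph correctly shows, it follows directly from the definition of the stable manifold (the forward iterates of $q$ converge to $\tilde p \in U_n$), and no lambda lemma is needed for that step.
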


\begin{proof}
Let $\{U_n:n\geq 1\}$ be a countable basis for the topology on $X$.  Let $S_n\subset \Aut\,X$ be the set of automorphisms that have a saddle periodic point in $U_n$ whose stable and unstable manifolds intersect transversely.  By Lemma \ref{l:delicate2}, $S_n$ is open, so the subset $G:=\bigcap\Aut\,X \setminus \partial S_n$ of $\Aut\,X$ is residual.

Let $f\in G$ and take $n\geq 0$ with $U_n\cap J_f^*\neq \varnothing$.  By the definition of $J_f^*$, $f$ has a saddle periodic point in $U_n$, so by Theorem \ref{t:homoclinic-perturbation}, $f\in \overline S_n$.  Also, by the definition of $G$, $f\not\in \partial S_n$, so $f\in S_n$.  Let $p\in U_n$ be a saddle periodic point of $f$ with a transverse homoclinic point $q$.  Then, if $m\geq 0$ is large enough, $f^m(q)$ is also a transverse homoclinic point and is contained in $U_n$.  
\end{proof}

By Theorem \ref{t:Kupka-Smale-2}, the corollary holds with the word \lq\lq transverse\rq\rq\ removed.

\section{Closing lemmas:  Open problems} 
\label{sec:closing}

\noindent
As before, we take $X$ to be a Stein manifold with the density property.   We say that the {\it closing lemma} holds for automorphisms of $X$ if, whenever $p\in X$ is a non-wandering point of an automorphism $f$ of $X$, every neighbourhood of $f$ in $\Aut\,X$ contains an automorphism of which $p$ is a periodic point.  Requiring $p$ to be hyperbolic results in an equivalent statement by the perturbation lemma \cite[Lemma 1]{AL2020}.  The {\it generic closing lemma} requires this to hold for automorphisms $f$ in a suitable residual subset of $\Aut\,X$.  The {\it generic density theorem} states that hyperbolic periodic points are dense in the non-wandering set of a generic automorphism of $X$.  It is usually proved as a consequence of the closing lemma, but an inspection of the standard proof (as in \cite{AL2020}) easily shows that the generic closing lemma suffices.  Conversely, conjugation by a suitable perturbation of the identity (provided by \cite[Lemma 3.2]{Varolin2000}; see also \cite[Proposition 1]{AL2019}) shows that the generic density theorem implies the generic closing lemma.  It is an open question whether the three statements are true, but with the non-wandering set replaced by the tame non-wandering set, they were proved in \cite{AL2020}.  The closing lemma is only in question when $p\notin\rne(f)\cup\rne(f^{-1})$, because otherwise $p$ is an attracting or repelling periodic point of $f$ itself (see Step 1 of the proof of \cite[Theorem 2]{AL2020}).

We introduce a variant of the  closing lemma and say that the {\it large-cycles closing lemma} holds for automorphisms of $X$ if, whenever $p\in X$ is a non-wandering point of an automorphism $f$ of $X$ with $p\notin\rne(f)\cup\rne(f^{-1})$ and $K$ is a compact subset of $X$, every neighbourhood of $f$ in $\Aut\,X$ contains an automorphism of which $p$ is a periodic point such that the orbit of $p$ does not lie in $K$.  Clearly the  large-cycles closing lemma implies the closing lemma.  The {\it generic large-cycles closing lemma} requires this to hold for automorphisms $f$ in a suitable residual subset of $\Aut\,X$ (recall that generically $\Omega_f \setminus \big(\rne(f)\cup\rne(f^{-1})\big)=J_f$).  It is equivalent to the {\it large-cycles generic density theorem}, which states that for a generic automorphism $f$ of $X$ and any compact subset $K$ of $X$, periodic points whose orbits do not lie in $K$ are dense in $ J_f$.

\begin{theorem}  \label{t:equality-of-Julia-sets}
Let $X$ be a Stein manifold with the density property.  The following are equivalent.
\begin{enumerate}
\item[(i)]  The generic large-cycles closing lemma for automorphisms of $X$.
\item[(ii)]  The large-cycles generic density theorem.
\item[(iii)]  Every non-wandering point of a generic automorphism of $X$ is tame.
\item[(iv)]  $J_f^* = J_f$ for a generic automorphism $f$ of $X$.
\item[(v)]  $f$ is chaotic on $J_f$ for a generic automorphism $f$ of $X$.
\item[(vi)]  Periodic points are dense in $J_f$ for a generic automorphism $f$ of $X$.
\end{enumerate}
\end{theorem}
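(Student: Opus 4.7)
The plan is to establish all six equivalences via the cycle
\[ \textrm{(i)} \Longleftrightarrow \textrm{(ii)} \Longrightarrow \textrm{(iii)} \Longrightarrow \textrm{(iv)} \Longleftrightarrow \textrm{(v)} \Longleftrightarrow \textrm{(vi)} \Longrightarrow \textrm{(iv)} \Longrightarrow \textrm{(ii)}. \]
The inner equivalences among (iii)--(vi) unpack directly from Theorem \ref{t:chaotic-and-non-chaotic-Julia-sets} and Theorem \ref{t:Kupka-Smale-1}. The two bridging implications (ii) $\Rightarrow$ (iii) and (iv) $\Rightarrow$ (ii) go through Remark \ref{r:def-tame} and Touhey chaos, respectively. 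The outer equivalence (i) $\Leftrightarrow$ (ii) is the standard closing-lemma / density-theorem correspondence adapted to the large-cycles setting, and is where I expect the main technical work to be.

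The block (iv) $\Leftrightarrow$ (v) $\Leftrightarrow$ (vi) should be quick. Implication (iv) $\Rightarrow$ (v) is Theorem \ref{t:chaotic-and-non-chaotic-Julia-sets}(d). Its converse follows from Theorem \ref{t:chaotic-and-non-chaotic-Julia-sets}(e), which characterises $J_f^*$ as the largest forward invariant subset on which $f$ is chaotic, together with the fact that $J_f$ is forward invariant, so chaos on $J_f$ forces $J_f\subset J_f^*$. Implication (v) $\Rightarrow$ (vi) is built into the Touhey definition of chaos. For (vi) $\Rightarrow$ (iv), Theorem \ref{t:Kupka-Smale-1} gives generic hyperbolicity of every periodic point, and Corollary \ref{c:stable-unstable-Julia} gives $J_f\cap(\rne(f)\cup\rne(f^{-1}))=\varnothing$, so periodic points in $J_f$ are precisely saddle points and density in $J_f$ yields $J_f\subset\overline{\sad(f)}=J_f^*$. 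Step (iii) $\Rightarrow$ (iv) is immediate from Theorem \ref{t:chaotic-and-non-chaotic-Julia-sets}(b) combined with $J_f\subset\Omega_f$.

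For the bridging implications: in (ii) $\Rightarrow$ (iii), let $p\in\Omega_f$ for a generic $f$. By Theorem \ref{t:non-wand-equals-Julia} either $p\in\att(f)\cup\rep(f)\subset\rne(f)\cup\rne(f^{-1})\subset\tam(f)$, or $p\in J_f$, and in the latter case Corollary \ref{c:stable-unstable-Julia} gives $p\notin\rne(f)\cup\rne(f^{-1})$. A compact exhaustion $\{K_n\}$ of $X$ combined with (ii) then produces periodic points $x_n\to p$ of $f$ whose orbits exit $K_n$; since each exit point of a periodic orbit is visited both in forward and backward time, Remark \ref{r:def-tame} applied with $f_n=f$ yields $p\in\tam(f)$. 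In (iv) $\Rightarrow$ (ii), fix $p\in J_f$, a neighbourhood $U$ of $p$, and a compact $K\subset X$. By Remark \ref{r:infinitely-many}, $J_f$ is not compact, so $J_f\setminus K$ is a nonempty open subset of $J_f$, and Touhey chaos on $J_f=J_f^*$ supplies a saddle cycle visiting both $U\cap J_f$ and $J_f\setminus K$, which is a periodic point in $U$ whose orbit leaves $K$.

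The main obstacle is (i) $\Leftrightarrow$ (ii), which must be set up inside the residual-set framework. For (i) $\Rightarrow$ (ii) I would fix a countable basis $\{U_n\}$ for $X$ and a compact exhaustion $\{K_m\}$, consider the open sets $S_{n,m}\subset\Aut\,X$ consisting of automorphisms with a hyperbolic periodic point in $U_n$ whose orbit exits $K_m$, and show that any $f$ in the residual intersection of the (i)-residual set with $\bigcap_{n,m}(\Aut\,X\setminus\partial S_{n,m})$ satisfies (ii). For (ii) $\Rightarrow$ (i), given a non-wandering $p\in J_f\setminus(\rne(f)\cup\rne(f^{-1}))$, a compact $K$, and a neighbourhood $V$ of $f$, I would use (ii) to find a periodic point $q$ of $f$ close to $p$ whose orbit escapes a suitably enlarged $K'\supset K$, and then invoke \cite[Theorem 2]{Varolin2000} to obtain $\phi\in\Aut\,X$ close to the identity on $K'$ with $\phi(q)=p$, so that $\phi\circ f\circ\phi^{-1}\in V$ has $p$ as a periodic point whose orbit still exits $K$. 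The delicate bookkeeping concerns coordinating the sizes of $K'$, the closeness of $q$ to $p$, and the closeness of $\phi$ to the identity, but this follows the standard pattern of \cite{AL2020}.
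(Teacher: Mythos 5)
Your proof is correct and follows essentially the same route as the paper: both rely on the inner equivalences (iii)$\Leftrightarrow\cdots\Leftrightarrow$(vi) coming from Theorems \ref{t:Kupka-Smale-1} and \ref{t:chaotic-and-non-chaotic-Julia-sets}, the standard closing-lemma/density-theorem correspondence (via Varolin conjugation and a residual-set argument) for (i)$\Leftrightarrow$(ii), the implication (ii)$\Rightarrow$(iii) via Remark \ref{r:def-tame}, and the Touhey chaos plus non-compactness of $J_f$ to get back from the chaos statement to the density theorem (your (iv)$\Rightarrow$(ii) is the paper's (v)$\Rightarrow$(ii) in disguise since (iv)$\Leftrightarrow$(v)). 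The paper's proof is terser, leaving (i)$\Leftrightarrow$(ii), (iii)$\Leftrightarrow\cdots\Leftrightarrow$(vi), and (ii)$\Rightarrow$(iii) as ``easy to see'' and only writing out the Touhey chaos step; you have filled in those routine details correctly.
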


\begin{proof}
It is easy to see that (i) and (ii) are equivalent and that (iii)--(vi) are equivalent.  Also (ii) implies (iii).  It remains to show that (v) implies (ii).  Since $J_f$ is not compact, there is $y\in J_f\setminus K$.  Let $x\in J_f$.  Since $f$ is chaotic on $J_f$, there is a cycle visiting both an arbitrary neighbourhood of $x$ and an arbitrary neighbourhood of $y$. 
\end{proof}

\begin{remark}
There is more structure inside the Julia set.  For an automorphism $f$ of $X$, let $I_f=\overline{\att(f)}\setminus\att(f)$ and $I'_f=I_{f^{-1}}=\overline{\rep(f)}\setminus\rep(f)$.  Clearly, $I_f$ and $I'_f$ are closed and completely invariant.  Also, $I_f\cup I'_f \subset J_f$ for generic $f$.  Indeed, $\overline{\att(f)}\subset\Omega_f$ since $\Omega_f$ is closed, and by Theorem \ref{t:non-wand-equals-Julia}, $\Omega_f = J_f \cup \att(f) \cup \rep(f)$, so $I_f\subset J_f$.  (Note that attracting periodic points of an automorphism cannot accumulate on a repelling periodic point.)  Similarly, $I'_f\subset J_f$.  Moreover, by Theorem \ref{t:chaotic-and-non-chaotic-Julia-sets}, $J_f^*\subset \overline{\att(f)}\cap \overline{\rep(f)}= I_f\cap I'_f$ for generic $f$.  The generic closing lemma says precisely that $J_f=I_f\cup I'_f$ for generic $f$.  The generic large-cycles closing lemma says precisely that all six completely invariant sets are the same.
\[ \xymatrix{  &  &  I_f \ar@{^{(}->}[dr] & & & & \\
J_f^* \ar@{^{(}->}[r] & I_f\cap I'_f  \ \ar@{^{(}->}[dr] \ar@{^{(}->}[ur] & & I_f\cup I'_f \ar@{^{(}->}[r] & J_f  \\
&  &  I'_f \ar@{^{(}->}[ur] & & & &
 } \]
\end{remark}

\end{document}